\theoremstyle{plain}
\newtheorem{theorem}{Theorem}[section]
\newtheorem{lemma}[theorem]{Lemma}
\newtheorem{corollary}[theorem]{Corollary}
\newtheorem{proposition}[theorem]{Proposition}
\newtheorem{conjecture}[theorem]{Conjecture}
\theoremstyle{definition}
\newtheorem{claim}{}[theorem]
\newtheorem{remark}{Remark}
\newcommand{\mcal}[1]{\ensuremath{\mathcal{#1}}}
\newcommand{\mbf}[1]{\ensuremath{\mathbf{#1}}}
\newcommand{\mbb}[1]{\ensuremath{\mathbb{#1}}}
\newcommand{\gf}[1]{\ensuremath{\mathrm{GF}(#1)}}
\newcommand{\pg}[1]{\ensuremath{\mathrm{PG}(#1)}}
\newcommand{\fr}[1]{\ensuremath{\mathrm{Fr}(#1)}}
\newcommand{\var}[1]{\ensuremath{\mathrm{Var}(#1)}}
\newcommand{\kin}[1]{\ensuremath{\mathrm{Kin}(#1)}}
\newcommand{\dash}{\nobreakdash-\hspace{0pt}}
\newcommand{\ba}{\backslash}
\newcommand{\cl}{\ensuremath{\operatorname{cl}}}
\newcommand{\PH}{$M$\protect\dash logic}
\title
[The missing axiom of matroid theory]
{Is the missing axiom of matroid theory lost forever?}
\author[Mayhew]{Dillon Mayhew}
\address{School of Mathematics, Statistics, and
Operations Research,
Victoria University,
Wellington,
New Zealand}
\email{dillon.mayhew@msor.vuw.ac.nz}
\author[Newman]{Mike Newman}
\address{Department of Mathematics and Statistics\\
University of Ottawa\\
Ottawa\\
Canada}
\email{mnewman@uottawa.ca}
\author[Whittle]{Geoff Whittle}
\address{School of Mathematics, Statistics, and
Operations Research,
Victoria University,
Wellington,
New Zealand}
\email{geoff.whittle@msor.vuw.ac.nz}
\subjclass{05B35,03C13}
\date{\today}
\begin{document}

\maketitle

\begin{abstract}
We conjecture that it is not possible to finitely
axiomatize matroid representability in
monadic second-order logic for matroids, and we describe
some partial progress towards this conjecture.
We present a collection of sentences
in monadic second-order logic and show that
it is possible to finitely axiomatize matroids using
only sentences in this collection.
Moreover, we can also axiomatize representability
over any fixed finite field (assuming Rota's conjecture holds).
We prove that it is not possible to finitely
axiomatize representability, or representability
over any fixed infinite field, using sentences from the
collection.
\end{abstract}

\section{Introduction}

The problem of characterizing representable
matroids is an old one.
(When we say that a matroid is representable, we mean that
it is representable over at least one field.)
Whitney discusses the task of
`characterizing systems which represent matrices'
in his foundational paper \cite{Whi35}.
From the context, it seems likely that
he means characterizing via a list of axioms.
We believe that this task will never be completed.
In other words, we conjecture that
`the missing axiom of matroid theory is lost forever'.

\begin{conjecture}
\label{conj1}
It is not possible to finitely
axiomatize representability for (finite)
matroids, using the same logical language as
the matroid axioms.
\end{conjecture}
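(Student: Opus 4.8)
The plan is to attack Conjecture~\ref{conj1} by the standard model-theoretic route: finite axiomatizability in a logic amounts to definability by a single sentence (conjoin the finitely many axioms), so it suffices to show that no single sentence of monadic second-order logic separates the representable matroids from the non-representable ones. Writing $\equiv_q$ for equivalence with respect to all sentences of quantifier rank at most $q$, I would produce, for every $q$, a representable matroid $N_q$ and a non-representable matroid $M_q$ with $M_q \equiv_q N_q$. Any candidate axiom $\phi$ has some finite quantifier rank $q$, and then $M_q \models \phi \iff N_q \models \phi$; since $N_q$ is representable and $M_q$ is not, $\phi$ cannot define representability, and the conjecture follows.

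The tool for establishing $M_q \equiv_q N_q$ is the monadic second-order Ehrenfeucht--Fra\"iss\'e game (the Fra\"iss\'e--Hintikka theorem): it is enough to give Duplicator a winning strategy in the $q$\dash round game on $(M_q,N_q)$, in which each round Spoiler either pebbles an element or selects a subset of the ground set, and Duplicator responds in the other structure while preserving the independence relation and the chosen subsets. First I would fix a combinatorial operation that manufactures non-representable matroids from representable ones while leaving the local structure essentially unchanged---natural candidates are spikes, swirls, Dowling geometries, or V\'amos\dash type gluings, all of which have delicately controlled representability and can be ``stretched'' to arbitrary size and branch-width. The aim is that, past a threshold depending on $q$, a stretched non-representable member $M_q$ and a stretched representable member $N_q$ become indistinguishable up to rank $q$.

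To verify this I would develop a composition theory (in the Feferman--Vaught / Shelah style) for the chosen gluing operation, showing that the rank\dash$q$ monadic type of a glued matroid is determined by the types of its pieces. Since there are only finitely many rank\dash$q$ types, a pumping argument lets me take enough copies that the pieces of $M_q$ and $N_q$ realize matching type multisets, forcing $M_q \equiv_q N_q$ even though the global assembly toggles representability. This is precisely the kind of bounded\dash complexity reasoning that succeeds for the restricted collection of the abstract, where the admissible sentences only probe configurations of bounded size.

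The hard part, and the reason this remains a conjecture rather than a theorem, is controlling Duplicator against the \emph{set} moves. Monadic second-order quantification over a matroid is powerful: a single set variable can name a hyperplane, a cocircuit, or a witness to a branch\dash decomposition, and through such sets the logic can recover a great deal of global geometric information---potentially enough to detect the very obstruction to representability that the construction is trying to conceal. Ruling out that \emph{some} ingenious set\dash quantifier pattern separates representable from non-representable matroids is the crux: unlike the restricted collection, arbitrary sentences are not confined to bounded\dash size probes, so the composition theory must be shown robust against all set moves, and it is here that I expect the argument to be genuinely difficult.
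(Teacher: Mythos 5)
The statement you are addressing is Conjecture~1.1, which the paper does not prove; it is left open, and the authors only establish a weakened version (Theorem~\ref{main}) in which the axioms are restricted to the fragment they call \PH. Your proposal, by your own admission in the final paragraph, is likewise not a proof: the entire content of the conjecture is concentrated in the step you defer, namely showing that Duplicator survives arbitrary monadic set moves. Everything before that point (conjoining axioms into one sentence, reducing to $\equiv_q$-equivalent pairs $M_q\equiv_q N_q$ with opposite representability, invoking Ehrenfeucht--Fra\"iss\'e games) is routine scaffolding that transfers the difficulty without reducing it. A Feferman--Vaught-style composition theorem for a matroid gluing operation that is robust against full monadic second-order set quantification is not known and is precisely what is missing; without it you have a research programme, not a proof. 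So the honest verdict is: genuine gap, and it is the whole theorem.

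That said, your choice of construction is well aligned with what the paper actually does for the restricted result, and it is worth seeing how they sidestep the set-move problem rather than solve it. They build the Kinser matroids $\kin{r}$ (note $\kin{4}$ is the rank-$4$ tipless free spike and $\kin{4}^-$ is the V\'amos matroid, so your ``spikes / V\'amos-type gluings'' instinct is exactly right), which are representable over every infinite field, become non-representable when one circuit-hyperplane $H_s\cup H_r$ is relaxed, and become representable again when two are relaxed. Their argument is then a direct counting argument, not a game: a sentence of \PH\ with at most $L$ variables, under a fixed interpretation, can only ``see'' at most $2^{2^L}$ definable subsets (unions of minterms), so for $N$ large some circuit-hyperplane $H_s\cup H_N$ is invisible to the sentence and can be relaxed without changing its truth value. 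This works only because \PH\ forbids alternation between the two kinds of set quantifier, so one can fix a single interpretation of all set variables at once; against alternating set quantifiers the ``invisible circuit-hyperplane'' can be named adaptively by the adversary, which is exactly the obstruction you identify. If you want to push your EF-game route, the Kinser family is the right testbed, but you would need to beat Spoiler's ability to pebble the relaxed set $H_s\cup H_N$ itself as a set move, and no such argument is currently available.
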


Of course, this conjecture is not well-posed, unless
we specify exactly what the language of matroid
axioms is.
Certainly, a logic powerful enough to express the existence
of a matrix over a field whose columns have the required pattern
of independence would suffice to axiomatize representability,
but this logic would need to be much
more powerful than the language typically used to axiomatize
matroids.
Conjecture~\ref{conj} is an attempt to make
Conjecture~\ref{conj1} more precise.
In our main result (Theorem~\ref{main}), we demonstrate that
a weakened version of Conjecture~\ref{conj} is true.

In Section~\ref{logic} we develop
\emph{monadic second-order logic for matroids} (MSOL).
In MSOL we are allowed to quantify over variables that
are intended to represent elements or subsets of a ground set.
We admit the function that takes a subset
to its cardinality.
We allow ourselves the relations of equality,
element containment, set inclusion,
and the less-than-or-equal order on integers.
In addition, we also include a function, $r$, that takes subsets
of the ground set to non-negative integers.
This is intended to be interpreted as a rank
function.
As an example of the expressive capabilities of
MSOL, a matroid is paving if and only if its
rank function obeys the following sentence.
\[
\forall X_{1}\ |X_{1}| < r(E) \to r(X_{1})=|X_{1}|
\]
Hlin{\v{e}}n{\'y}~\cite{Hli06c} introduced a
logical language which was also called
monadic second-order logic for matroids.
It is easy to see that any sentence
in Hlin{\v{e}}n{\'y}'s language can be translated into
a sentence in our language.
On the other hand, our language includes the cardinality
function, while Hlin{\v{e}}n{\'y}'s does not.

The matroid rank axioms can be stated as sentences in
MSOL.
(Throughout the article we consider a matroid to
be a finite set equipped with a rank function.)
Moreover, for any matroid $N$,
we can construct a sentence in MSOL that will be true for a
matroid $M$ if and only if $M$ has an $N$\dash minor
(Proposition~\ref{1}).
This means that if Rota's conjecture is true, then
\gf{q}\dash representability can be finitely axiomatized
in MSOL, for any prime power $q$ (Lemma~\ref{Rota}).
We conjecture that it is impossible to finitely
axiomatize representability in MSOL.

\begin{conjecture}
\label{conj}
There is no finite set of sentences, \mcal{K},
in MSOL with the following property:
a finite set, $E^{\mcal{M}}$, equipped with a function
$r^{\mcal{M}}\colon \mcal{P}(E^{\mcal{M}})\to\mbb{Z}^{+}\cup\{0\}$,
is a representable matroid
if and only if $(E^{\mcal{M}},r^{\mcal{M}})$
satisfies the rank axioms and every sentence
in \mcal{K}.
\end{conjecture}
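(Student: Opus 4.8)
The plan is to prove Conjecture~\ref{conj} by the standard inexpressibility strategy: produce two sequences of finite matroids, a representable sequence $(P_{n})$ and a non\dash representable sequence $(Q_{n})$, that become indistinguishable in MSOL as $n$ grows. Concretely, I would try to arrange that for every fixed MSOL sentence $\phi$ there is an $n_{0}$ with $P_{n}\models\phi \iff Q_{n}\models\phi$ for all $n\ge n_{0}$; equivalently, that $P_{n}$ and $Q_{n}$ agree on all sentences of quantifier rank at most $m(n)$ with $m(n)\to\infty$. If this can be done, the conjecture follows immediately: were there a finite axiomatizing set $\mcal{K}$, its conjunction with the rank axioms would be a single MSOL sentence $\Psi$, and for large $n$ we would have $P_{n}\models\Psi \iff Q_{n}\models\Psi$, contradicting that $P_{n}$ is representable (hence satisfies $\Psi$) while $Q_{n}$ is not (hence fails $\Psi$, since it already satisfies the rank axioms).

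The engine for the indistinguishability step is the Ehrenfeucht--Fra{\"\i}ss{\'e} game for monadic second\dash order logic, in which Spoiler may make either element moves or subset moves and Duplicator must maintain a partial isomorphism that also respects rank, cardinality, and the integer order. To run it I would look for algebraically controlled families whose representability turns on a single field identity: for instance spikes, Dowling\dash type geometries, or a base matroid $P_{n}$ altered at one circuit\dash hyperplane to obtain a non\dash representable $Q_{n}$, so that $P_{n}$ and $Q_{n}$ are combinatorially ``the same'' except at a configuration whose size is small relative to $m(n)$. The goal would be to show that Duplicator wins the $m(n)$\dash round game on $(P_{n},Q_{n})$ by a strategy that copies Spoiler's moves away from the altered region and answers moves near it using the large amount of ambient symmetry.

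The hard part, and the reason only the weakened Theorem~\ref{main} is obtained, is that MSOL as defined here is extraordinarily expressive: it is handed the rank function as an explicit map into $\mbb{Z}$, together with the cardinality function and the order $\le$ on integers. Consequently Duplicator must preserve not merely the incidence pattern under subset moves but every rank and cardinality value and every arithmetic comparison between them, and global numerical invariants that are normally difficult to capture become directly available to Spoiler. The usual locality machinery (Hanf and Gaifman locality, and Feferman--Vaught\dash style composition) therefore does not apply off the shelf, and I do not know how to build families on which Duplicator survives the combined element, subset, and arithmetic moves. This is exactly what restricting to the distinguished collection buys in Theorem~\ref{main}: bounding how the rank function may be interrogated tames the game, whereas full MSOL leaves it wild. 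It is also worth noting that the compactness and ultraproduct arguments that settle such questions for first\dash order languages are unavailable here, since MSOL is not compact and can already express finiteness and exact counting.

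A complementary line, which I would pursue in parallel, reduces the conjecture to a decidability question. Because any finite set $\mcal{K}$ together with the rank axioms amounts to a single sentence $\Psi$, and because evaluating a fixed MSOL sentence on a given finite matroid is a terminating computation (all quantifiers range over finite sets and finite power sets, and $r$, $|\cdot|$, and $\le$ return concrete integers), any MSOL\dash axiomatizable class of finite matroids is decidable. Hence proving that representability over some field is \emph{undecidable} would establish Conjecture~\ref{conj} at once. The obstacle here is that the decidability of matroid representability is itself a longstanding open problem, so this route trades the conjecture for a question that may well be harder; nonetheless it clarifies exactly what any positive resolution would have to rule out.
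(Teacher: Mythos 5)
You have not proved the statement, and neither does the paper: \ref{conj} is a conjecture, and the paper explicitly leaves it open, establishing only the weakened Theorem~\ref{main} for the fragment \PH. Your proposal is a strategy outline whose central step --- that Duplicator wins the $m(n)$-round monadic second-order Ehrenfeucht--Fra\"{\i}ss\'e game on pairs $(P_n,Q_n)$ while preserving rank, cardinality, and arithmetic comparisons --- is never established; you name no specific family, give no Duplicator strategy, and candidly say you do not know how to survive the combined element, subset, and arithmetic moves. That admission is exactly where the mathematical content would have to live, so this is a genuine gap rather than an alternative proof. It is worth noting how close your intended family is to the paper's: the device of a representable base matroid that becomes non-representable upon relaxing one circuit-hyperplane, and representable again upon relaxing two, is precisely the Kinser construction (Proposition~\ref{KinKrep}, Proposition~\ref{Kin-}, Lemma~\ref{doublerelax}). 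But the paper then sidesteps games entirely: for a sentence in \PH\ with at most $L$ variables, it counts definable subsets relative to an interpretation (at most $2^{2^{L}}$, via minterms and Proposition~\ref{symdif}), chooses $N$ large enough that some circuit-hyperplane $H_s\cup H_N$ of $\kin{N}$ is undefinable under \emph{every} relevant interpretation, and transfers satisfaction between $\kin{N}$, $\kin{N}^{-}$, and $\kin{N}_s^{=}$ using the symmetry of the family. This counting argument leans essentially on the uniform quantifier prefix of \PH\ (the four cases in the proof of Theorem~\ref{main2} correspond to the four possible prefix shapes); with alternating quantifiers the chosen relaxation would have to work against all responses at every alternation, which is exactly the difficulty your game-theoretic plan would face and does not resolve.

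Your fallback route through undecidability is also closed, not merely hard: representability over at least one field reduces to the solvability, over some field, of a finite system of polynomial equations and inequations extracted from the basis structure, and the set of characteristics in which such a system is solvable is computable (Gr\"obner-basis methods in characteristic zero together with characteristic-set algorithms in the style of Baines and V\'amos). Hence membership in the class of representable matroids is decidable, and no undecidability proof of the kind you propose can exist. Since any finite MSOL axiomatization would indeed yield a decision procedure, decidability is consistent with the conjecture but gives you no leverage toward it. In summary: your first paragraph's reduction (a finite $\mcal{K}$ plus the rank axioms collapses to a single sentence $\Psi$ that must separate the two sequences) is correct and matches the paper's framing, but the load-bearing indistinguishability claim is missing, and the auxiliary route is provably a dead end; what you have is a reasonable research plan for the open Conjecture~\ref{conj}, not a proof.
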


Our main result (Theorem~\ref{main}) shows
that Conjecture~\ref{conj} is true
if we insist that the sentences in \mcal{K} must
come from a restricted subset of MSOL.
We use the terminology \emph{\PH} to describe a
set of formulas in MSOL with constrained
quantification.
A formula in \PH\ must have the following
property: all variables representing subsets receive the
same type of quantifier (universal or existential),
and the same constraint applies to variables
representing elements.
We define \PH\ formally in Section~\ref{Mlogic}.

If \mcal{F} is a collection of fields, let $M(\mcal{F})$ be the
set of matroids that are representable over at least one field in
\mcal{F}.
Note that if \mcal{F} is the set of all fields, then
$M(\mcal{F})$ is the set of representable matroids.

\begin{theorem}
\label{main}
Let \mcal{F} be a set of fields that contains at least
one infinite field.
There does not exist a finite set, \mcal{K}, of sentences in \PH\
with the following property:
a finite set, $E^{\mcal{M}}$, equipped with a function
$r^{\mcal{M}}\colon \mcal{P}(E^{\mcal{M}})\to\mbb{Z}^{+}\cup\{0\}$,
is a matroid in $M(\mcal{F})$
if and only if $(E^{\mcal{M}},r^{\mcal{M}})$
satisfies the rank axioms and every sentence
in \mcal{K}.
\end{theorem}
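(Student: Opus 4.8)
The plan is to argue by contradiction. Suppose such a finite $\mcal{K}$ exists. Since $\mcal{K}$ is finite, there is a bound $k$ on the number of set variables and a bound $m$ on the number of element variables occurring in any sentence of $\mcal{K}$ (or in the rank axioms). It then suffices to produce two finite rank systems $A$ and $B$, both satisfying the rank axioms, with $A \in M(\mcal{F})$ but $B \notin M(\mcal{F})$, that agree on every sentence of \PH\ using at most $k$ set variables and $m$ element variables; for then $\mcal{K}$ cannot separate $A$ from $B$. The essential point---and the reason one cannot hope to run a routine bounded-quantifier-rank argument---is that \emph{any} finite set of ordinary monadic second-order sentences also has bounded quantifier rank, yet the authors only \emph{conjecture} the corresponding statement for full MSOL (Conjecture~\ref{conj}). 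The proof must therefore exploit the defining restriction of \PH: that all set variables carry one quantifier type and all element variables another.

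I would exploit this restriction through the evaluation game for \PH. Because the polarity of each quantifier is dictated by the sort of its variable, the game separating $A$ from $B$ is strictly more constrained than the full two-sorted Ehrenfeucht--Fra{\"\i}ss{\'e} game for MSOL: in effect, the falsifier may only play universally quantified objects and the verifier only existentially quantified ones, with the sort fixing which of the two selects a given object. I would first isolate the resulting preservation property of \PH\ sentences, the two-sorted, rank-equipped analogue of the fact that purely existential sentences survive extensions and purely universal ones survive restrictions. The goal is a manageable criterion: a back-and-forth system between $A$ and $B$ that respects containment, incidence, and all values of $r(\cdot)$ and $|\cdot|$ on the Boolean combinations of the (at most $k+m$) selected objects, and that need only answer moves of the single polarity permitted by each sort. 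Such a system can exist between matroids that full MSOL distinguishes.

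The construction of $A$ and $B$ is where the infinite field enters. Let $\mbb{K} \in \mcal{F}$ be infinite. I would build $A$ and $B$ on a common large ground set as highly symmetric matroids---spike-like or long-line configurations---whose representability hinges on a single globally distributed dependency: $A$ placed in generic position so that its defining linear system is solvable over $\mbb{K}$ (possible precisely because $\mbb{K}$ is infinite, allowing arbitrarily long lines and generic points), and $B$ obtained by perturbing that dependency so that no representation survives over any field of $\mcal{F}$. The symmetry (for instance a symmetric group acting on the legs of a spike) is chosen so that any configuration meeting few legs is isomorphic in $A$ and $B$; with the ground set taken large relative to $k$ and $m$, the bounded back-and-forth of the previous paragraph goes through, and $A$ and $B$ agree on all of $\mcal{K}$.

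The main obstacle is this construction, and in particular the simultaneous demands it must meet. The perturbed system $B$ must genuinely fail to be representable over \emph{every} field of $\mcal{F}$---including $\mbb{K}$---so the obstruction must be globally incompatible rather than local; it must nonetheless remain locally indistinguishable from the representable $A$, so that the single-polarity back-and-forth cannot detect it; and its rank function must still satisfy the matroid axioms. Reconciling global non-representability with local sameness is the crux, and it is exactly here that infinitude of $\mbb{K}$ is indispensable: over a finite field, Rota's conjecture together with Proposition~\ref{1} already yields a finite \PH\nobreakdash-axiomatization (Lemma~\ref{Rota}), so no such separating pair can exist and any candidate construction must collapse. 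Once the symmetric construction and its back-and-forth are in hand, the verification that $A$ and $B$ agree on every sentence of $\mcal{K}$ is comparatively routine bookkeeping of ranks and cardinalities.
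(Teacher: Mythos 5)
Your outline has the right overall shape---argue by contradiction, bound the number of variables, and defeat the axioms with a large, highly symmetric, spike-like matroid whose representability hinges on one distributed dependency (the paper's \kin{r} family does generalize the rank\dash $4$ tipless free spike, and the perturbed object $\kin{4}^{-}$ is the V\'{a}mos matroid)---but the two steps you yourself flag as the crux are exactly the ones left undone, and one structural decision you make is likely unworkable. First, the construction: you need a matroid $B$ that is non-representable over \emph{every} field yet locally identical to a representable $A$, and ``perturbing the dependency'' does not produce this for free. The paper gets it by relaxing a single circuit-hyperplane of \kin{r} and proving non-representability via Kinser's inequalities (Lemma~\ref{kinser} and Proposition~\ref{Kin-}); the companion fact that \kin{r} itself is representable over every infinite field requires a separate projective-geometry argument (Proposition~\ref{infinity}). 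None of this is routine bookkeeping, and without it there is no theorem.

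Second, and more structurally: you aim to produce a single pair $(A,B)$ agreeing on \emph{all} \PH\ sentences with at most $k$ set and $m$ element variables. For the pair $(\kin{N},\kin{N}^{-})$ this two-way agreement fails: an interpretation witnessing an existential-set sentence in $\kin{N}^{-}$ may use the relaxed set $H_{1}\cup H_{N}$ essentially (e.g.\ assert it is a basis), and no interpretation in \kin{N} can match it. The paper does not need a single indistinguishable pair. It argues sentence by sentence: it fixes one sentence $S\in\mcal{K}$ failing on $\kin{N}^{-}$, splits into four cases by the quantifier prefix, and uses a counting argument on ``definable'' subsets (at most $2^{2^{L}}$ of them versus $N-1$ circuit-hyperplanes $H_{k}\cup H_{N}$) to find an undetectable relaxation. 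Crucially, the cases with universal set quantifiers are resolved not by transferring $S$ back to \kin{N} but by passing to a \emph{doubly} relaxed matroid $\kin{N}_{s}^{=}$, which Lemma~\ref{doublerelax} shows is again representable over any infinite field. This relax-one/relax-two dichotomy is the idea that makes the single-polarity restriction of \PH\ bite, and it is absent from your proposal; your back-and-forth system, as described, would have to handle both directions simultaneously and would collapse into the full bounded-quantifier-rank equivalence that you correctly observe cannot be established (else Conjecture~\ref{conj} would already follow).
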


Because any minor-closed class of matroids with a
finite number of excluded minors can be characterized
in $M$\dash logic (Corollary~\ref{cor}),
Theorem~\ref{main} gives us an alternative
proof and a strengthening of
\cite[Theorem~6.5.17]{Oxl11}.

\begin{corollary}
\label{cor2}
Let \mcal{F} be a set of fields that contains at least
one infinite field.
There are infinitely many excluded minors for
the class $M(\mcal{F})$.
\end{corollary}

Theorem~6.5.17 in \cite{Oxl11} concerns the case of
Corollary~\ref{cor2} when $|\mcal{F}|=1$.

We are interested in \PH\ because it provides a
separation between representability over finite fields
and infinite fields.
The axioms for matroid rank functions,
independent sets, bases, and spanning sets can
all be expressed using sentences in \PH\ (Section~\ref{axioms}).
Moreover, if Rota's conjecture holds, then
representability over a finite field
can be finitely axiomatized using sentences
in \PH\ (Lemma~\ref{Rota}).
Theorem~\ref{main} shows this is not the case for any infinite field.

The reader may be puzzled by our titular question,
since it is seemingly answered by
a well-known article due to V\'{a}mos \cite{Vam78}.
His article has the dramatic
title `The missing axiom of matroid theory is lost forever'.
When we examined the article, we were surprised to discover that
the words `matroid' and `axiom' in his title were
not used in the way we expected.
V\'{a}mos's result has been interpreted as
making a statement about finite matroids \cite{Gee08};
this is certainly what we anticipated.
But in the title of his paper, the word `matroid'
refers to an object that may be infinite.
Furthermore, it seems natural to use `axiom' to mean a
sentence constructed in the same language as the other matroid
axioms, but V\'{a}mos uses it to mean a sentence in a
first-order language which we call $V$\dash logic.
This logic is not
capable of expressing the matroid axioms
(as they are presented in \cite{Oxl11,Wel76,Whi35}).

We describe the matroid-like objects that V\'{a}mos
considers.
A \emph{pre-independence space} (see \cite{Oxl92b}) is a set
$E$, along with a family, \mcal{I}, of subsets of $E$,
called \emph{independent sets},
satisfying:
({\bf I1}) $\emptyset\in \mcal{I}$,
({\bf I2}) if $I\in\mcal{I}$ and $I'\subseteq I$, then
$I'\in \mcal{I}$, and
({\bf I3}) if $I$ and $I'$ are finite members of
\mcal{I} and $|I|=|I'|+1$, then there is an element
$x\in I-I'$ such that $I'\cup x\in\mcal{I}$.
Note that every finite pre-independence space is a (finite) matroid.
An \emph{independence space} is a pre-independence space
that satisfies
({\bf I4}) if $X\subseteq E$, and every finite subset
of $X$ is in \mcal{I}, then $X$ is in \mcal{I}.
The objects that V\'{a}mos calls matroids, and which we
call \emph{$V$\dash matroids}, are precisely the
pre-independence spaces with no infinite independent sets.

Let $(E,\mcal{I})$ be a $V$\dash matroid, and let
\mcal{S} be the family of infinite subsets of $E$
such that $X\in \mcal{S}$ if and only if every finite subset
of $X$ is independent.
Adding any arbitrary subset of \mcal{S} to \mcal{I}
produces a pre-independence space, and any pre-independence
space on the set $E$ containing \mcal{I} as its family of
finite independent sets can be produced in this way.
Thus $V$\dash matroids and independence spaces can be seen as
occupying opposite ends of the pre-independence space
spectrum:
a $V$\dash matroid is produced by adding the empty subset
of \mcal{S} to \mcal{I}, and an independence space is
produced by adding all of \mcal{S} to \mcal{I}.
As $V$\dash matroids are examples of
pre-independence spaces, they share some of the
peculiarities of this class.
For example, consider an infinite set $E$, and let
\mcal{I} be the collection of all finite subsets
of $E$ (c.f.\ \cite[Example~3.1.1]{Oxl92b}).
This is a $V$\dash matroid that has no
maximal independent sets, and no minimal
dependent sets.
Examples such as these perhaps explain why
pre-independence spaces have not been studied nearly
as often as independence spaces or $B$\dash matroids
(see \cite{Oxl92b}); and $V$\dash matroids have
been examined even less frequently.
So far as we are aware, V\'{a}mos's paper is the
only work in the literature that considers
$V$\dash matroids.

The first-order language that we call \emph{$V$\dash logic}
features, for every positive integer $n$,
an $n$\dash ary predicate, $I_{n}$.
The statement $I_{n}(x_{1},\ldots, x_{n})$ is designed to
be interpreted as saying that $\{x_{1},\ldots, x_{n}\}$
is in \mcal{I}.
Then $V$\dash matroids are models of a theory in $V$\dash logic.
Let \mcal{A} be a set of sentences in $V$\dash logic that
has the set of $V$\dash matroids as its models.
For example, \mcal{A} might contain, for every $n$,
the sentence
\[
\forall x_{1}\cdots \forall x_{n}\ 
I_{n}(x_{1},\ldots, x_{n})\to
\bigwedge_{\sigma\in S_{n}}
I_{n}(x_{\sigma(1)},\ldots, x_{\sigma(n)})
\]
to ensure that \mcal{I} consists of unordered sets.
It could also contain, for every $n$,
the sentence
\begin{multline*}
\forall x_{1}\cdots \forall x_{n}\forall y_{1}\cdots\forall y_{n+1}\
I_{n}(x_{1},\ldots, x_{n})\land
I_{n+1}(y_{1},\ldots, y_{n+1})\to\\
I_{n+1}(x_{1},\ldots,x_{n},y_{1})\lor\cdots\lor
I_{n+1}(x_{1},\ldots,x_{n},y_{n+1})
\end{multline*}
to ensure that ({\bf I3}) holds.

V\'{a}mos declares a $V$\dash matroid, $(E,\mcal{I})$, to be
\emph{representable} if there is a function
from $E$ to a vector space that preserves the
rank of finite subsets.
His theorem is as follows.

\begin{theorem}[\cite{Vam78}]
\label{vamos}
There is no sentence, $S$, in $V$\dash logic, such that
a $V$\dash matroid is representable
if and only if it satisfies $S$.
\end{theorem}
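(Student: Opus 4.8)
The plan is to argue by contradiction, treating $V$\dash logic as what it is---ordinary first\dash order logic---and applying \L o\'{s}'s theorem on ultraproducts. Suppose a sentence $S$ as described exists. Since the $V$\dash matroids are exactly the models of $\mcal{A}$, and a $V$\dash matroid is representable precisely when it satisfies $S$, every non\dash representable $V$\dash matroid is a model of $\mcal{A}\cup\{\neg S\}$. By \L o\'{s}'s theorem this class is closed under ultraproducts, so to reach a contradiction it suffices to produce non\dash representable $V$\dash matroids whose ultraproduct is representable. The representable side offers no such room: an ultraproduct of representations is again a representation, because linear independence of a fixed finite set of vectors passes through ultraproducts of vector spaces. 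Hence any contradiction must come from the non\dash representable side, and that is where I would build it.

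For the construction I would use projective planes as characteristic detectors. For a prime $p$, the rank\dash$3$ matroid $\pg{2,p}$ has characteristic set $\{p\}$: it is represented by its points over $\gf{p}$, while any representation over a field $F$ forces a coordinatised subplane of order $p$ and hence $\gf{p}\subseteq F$, so $\mathrm{char}(F)=p$. Choose primes with $p_{n}\neq q_{n}$ and $p_{n},q_{n}\to\infty$ (for example, consecutive primes), and set
\[
N_{n}=\pg{2,p_{n}}\oplus\pg{2,q_{n}}.
\]
Each $N_{n}$ is non\dash representable, since a representation would demand $\mathrm{char}(F)=p_{n}$ and $\mathrm{char}(F)=q_{n}$ at once.

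Fix a non\dash principal ultrafilter $\mcal{U}$ on $\mbb{N}$ and set $N=\prod_{n}N_{n}/\mcal{U}$. Rank is additive across the two parts of each $N_{n}$, and this additivity transfers through \L o\'{s}'s theorem, so $N$ is the direct sum $A\oplus B$ of its two parts, where $A=\prod_{n}\pg{2,p_{n}}/\mcal{U}$ and $B=\prod_{n}\pg{2,q_{n}}/\mcal{U}$. Taking the ultraproduct of the standard representations of the $\pg{2,p_{n}}$ over $\gf{p_{n}}$ yields, again by \L o\'{s}'s theorem, a representation of $A$ inside $K_{1}^{3}$ for $K_{1}=\prod_{n}\gf{p_{n}}/\mcal{U}$; as $p_{n}\to\infty$ and $\mcal{U}$ is non\dash principal, $K_{1}$ has characteristic $0$. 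Symmetrically $B$ is representable over the characteristic\dash$0$ field $K_{2}=\prod_{n}\gf{q_{n}}/\mcal{U}$. Embedding $K_{1}$ and $K_{2}$ into a common algebraically closed field $\Omega$ of characteristic $0$ and placing the two representations in complementary coordinate blocks represents $A\oplus B=N$ over $\Omega$. By \L o\'{s}'s theorem $N$ still models $\neg S$, yet being representable it must model $S$; this is the desired contradiction.

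I would dispatch the bookkeeping---that $\mcal{A}$ axiomatises the $V$\dash matroids, that ultraproducts respect the two\dash part direct sum, and that \L o\'{s}'s theorem carries finite independence through the representations---without much ceremony. The real content, and the step I expect to demand the most care, is the pair of characteristic\dash set facts for $\pg{2,p}$ together with the amalgamation of the two characteristic\dash$0$ limits over a single field $\Omega$. This amalgamation is precisely the point at which the conflict built into every finite $N_{n}$---two incompatible mandatory characteristics---dissolves in the limit, and making it rigorous, rather than gesturing at ``characteristic $0$'', is the crux of the argument.
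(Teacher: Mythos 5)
Your argument is correct, but note first that the paper does not actually prove Theorem~\ref{vamos}: it cites V\'{a}mos \cite{Vam78}, and in the introduction it explicitly flags a gap in V\'{a}mos's own argument, namely its reliance on the claim (his Lemma~0) that every non\dash representable $V$\dash matroid has a finite non\dash representable restriction --- a claim supported only by references about independence spaces, which $V$\dash matroids need not be. So there is no in\dash paper proof to compare against; what can be said is that your proof is complete and sidesteps exactly the step the paper identifies as problematic. Instead of needing to know that non\dash representability of a limit object is witnessed by a finite restriction, you exhibit explicit finite non\dash representable $V$\dash matroids $N_{n}=\pg{2,p_{n}}\oplus\pg{2,q_{n}}$ and prove representability of their ultraproduct \emph{directly}, by taking the ultraproduct of the two families of $\gf{p_{n}}$\dash\ and $\gf{q_{n}}$\dash representations; the only inputs are {\L}o\'{s}'s theorem, the standard characteristic\dash set fact for $\pg{2,p}$ (see \cite{Oxl11}), and the amalgamation of two characteristic\dash $0$ fields of cardinality at most $2^{\aleph_{0}}$ inside $\mbb{C}$. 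Three points deserve an explicit sentence each when you write this up. First, the decomposition $N=A\oplus B$ of the ultraproduct is not literally a first\dash order transfer, since the bipartition of $E(N_{n})$ is not in the language of $V$\dash logic; it follows by applying {\L}o\'{s} to the atomic formulas $I_{k}(x_{1},\ldots,x_{k})$ and noting that the coordinates of any finite subset of $E(N)$ land in the correct blocks of $E(N_{n})$ for $\mcal{U}$\dash almost all $n$. Second, rank\dash preservation of the ultraproduct representation reduces to the fact that linear independence of at most three vectors in a $3$\dash dimensional space is the nonvanishing of a minor, hence transfers. Third, it is the non\dash principality of $\mcal{U}$ together with $p_{n},q_{n}\to\infty$ that forces both limit fields to have characteristic $0$, which is precisely where the per\dash index obstruction dissolves; you have correctly identified this amalgamation as the crux, and it does go through.
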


This theorem has no implications for
Conjecture~\ref{conj}, since it concerns a
different class of objects, and a different
logical language.
We feel that Conjecture~\ref{conj} quite
closely captures Whitney's question concerning
the characterization of representable matroids.
Therefore, our opinion is that Theorem~\ref{vamos}
does not answer the question posed by our title.

Obviously \mcal{A} is an infinite set of sentences, and
it is an easy exercise
to show that no finite set of sentences in $V$\dash logic has
the class of $V$\dash matroids as its set of models.
Given this fact, we are not surprised to
learn from Theorem~\ref{vamos}
that representable $V$\dash matroids cannot be characterized
with a single additional sentence.
In fact, we would go further, and conjecture that
no `natural' class of $V$\dash matroids can be
characterized by adding a single sentence to \mcal{A}.
(We are being deliberately vague about the meaning of
the word `natural'.)

In first-order logic, it is impossible to distinguish
between finite and infinite sets.
This is (presumably) the reason Theorem~\ref{vamos} is stated
in terms of $V$\dash matroids and not independence spaces,
as the axioms for independence spaces require us
to differentiate between finite and infinite sets, and
therefore cannot be expressed in $V$\dash logic.
V\'{a}mos's proof strategy relies upon the Compactness
Theorem of first-order logic.
One of the consequences of this is that we cannot
hope to rework the strategy to prove Conjecture~\ref{conj},
as that would require having an axiom in first-order
logic that restricts us to finite sets.

We conclude this introduction by briefly
describing the strategy for proving Theorem~\ref{main}.
The first step involves
developing an infinite family of matroids, each of
which is representable
over all infinite fields (Section~\ref{kinssect}).
Each matroid in the family has a number of circuit-hyperplanes,
and relaxing any one produces a non-representable matroid,
while relaxing two produces another matroid representable
over all infinite fields.
If there is a finite axiomatization of representability,
then that set of axioms must be able to distinguish between
these matroids.
Roughly speaking, we obtain a contradiction
by showing that, for large enough matroids in the family,
the number of circuit-hyperplanes is so great that an axiom
with a bounded number of variables cannot detect
all the potential relaxations.

\section{A language for matroids}
\label{logic}

In this section we develop monadic second-order
logic for matroids, and we describe \PH\ as
a set of formulas in MSOL.

\subsection{Monadic second-order logic}
Monadic second-order logic for matroids is a formal
language constructed from the following symbols:
the variables $x_{1},x_{2},x_{3},\ldots$ and
$X_{1},X_{2},X_{3},\ldots$;
the constants $\emptyset$, $0,1,2,\ldots$ and $E$;
the function symbols $|\cdot|$, $\{\cdot\}$, $\overline{\cdot}$,
$r(\cdot)$, $+$, $\cup$, and $\cap$;
the relation symbols $=$, $\in$, $\subseteq$, and $\leq$;
and the logical symbols $\neg$, $\lor$, $\land$, $\exists$, and
$\forall$.

\paragraph{\bf Terms.}
We divide the terms in MSOL into three classes,
\mcal{E}, \mcal{S}, and \mcal{N}.
Let \mcal{E} be the infinite set of
variables $\{x_{1},x_{2},x_{3},\ldots\}$.
The terms in \mcal{E} are intended to represent
elements of a ground set.

The set of terms in \mcal{S} is the smallest
collection of expressions satisfying:
\begin{enumerate}[label=(\arabic*)]
\item the constants $E$ and $\emptyset$ are terms
in \mcal{S},
\item every variable $X_{i}$ is a term in \mcal{S},
\item if $x_{i}$ is a variable in \mcal{E}, then
$\{x_{i}\}$ is a term in \mcal{S},
\item if $X$ and $Y$ are terms in \mcal{S}, then so are
$\overline{X}$, $X\cup Y$, and $X\cap Y$.
\end{enumerate}
The terms in \mcal{S} are intended to represent
subsets of a ground set.

Finally, we define the terms in \mcal{N}.
These are intended to represent non-negative integers.
The set of terms in \mcal{N} is the smallest set
of expressions satisfying:
\begin{enumerate}[label=(\arabic*)]
\item every constant in $\{0,1,2,\ldots\}$
is a term in \mcal{N},
\item if $X$ is a term in \mcal{S}, then
$|X|$ and $r(X)$ are terms in \mcal{N},
\item if $p$ and $q$ are terms in \mcal{N}, then
$p+q$ is a term in \mcal{N}.
\end{enumerate}

If $T$ is a term, then we recursively
define \var{T} to be the set of variables in $T$:
\begin{enumerate}[label=(\arabic*)]
\item \var{E} and \var{\emptyset} are empty, and so is \var{p},
for any constant $p\in \{0,1,2,\ldots\}$,
\item $\var{X_{i}}=\{X_{i}\}$,
\item $\var{x_{i}}=\var{\{x_{i}\}}=\{x_{i}\}$,
\item $\var{\overline{X}}=\var{|X|}=\var{r(X)}=\var{X}$, for any term
$X\in \mcal{S}$,
\item $\var{X\cup Y}=\var{X\cap Y}=\var{X}\cup \var{Y}$, for any
terms $X,Y\in \mcal{S}$,
\item $\var{p+q}=\var{p}\cup \var{q}$, for any terms
$p,q\in \mcal{N}$.
\end{enumerate}

\paragraph{\bf Formulas.}
Now we recursively define formulas in MSOL, and simultaneously
define their sets of variables.
The following four statements define
expressions known as \emph{atomic formulas}.
\begin{enumerate}[label=(\arabic*)]
\item if $x,y\in \mcal{E}$, then $x=y$ is an atomic formula,
and $\var{x=y}=\{x,y\}$.
\item if $X,Y\in \mcal{S}$, then $X=Y$ and
$X\subseteq Y$ are atomic formulas, and
$\var{X=Y}=\var{X\subseteq Y}=\var{X}\cup\var{Y}$,
\item if $p,q\in \mcal{N}$, then $p=q$ and $p\leq q$ are atomic formulas,
and $\var{p=q}=\var{p\leq q}=\var{p}\cup\var{q}$,
\item if $x\in \mcal{E}$ and $X\in\mcal{S}$, then
$x\in X$ is an atomic formula, and
$\var{x\in X}=\var{X}\cup \{x\}$,
\end{enumerate}

A \emph{formula} is an expression generated by a finite
application of the following rules.
Every formula has an associated set of variables
and \emph{free variables}:
\begin{enumerate}[label=(\arabic*)]
\item every atomic formula $P$ is a formula,
and $\fr{P}=\var{P}$,
\item if $P$ is a formula and $X_{i}\in \fr{P}$, then
$\exists X_{i} P$ and $\forall X_{i} P$ are formulas, and
$\var{\exists X_{i} P}=\var{\forall X_{i} P}=\var{P}$, while
$\fr{\exists X_{i} P}=\fr{\forall X_{i} P}=\fr{P}-\{X_{i}\}$,
\item if $P$ is a formula and $x_{i}\in\fr{P}$, then $\exists x_{i} P$
and $\forall x_{i} P$ are formulas, and
$\var{\exists x_{i} P}=\var{\forall x_{i} P}=\var{P}$, while
$\fr{\exists x_{i} P}=\fr{\forall x_{i} P}=\fr{P}-\{x_{i}\}$,
\item if $P$ is a formula, then
$\neg P$ is a formula, and $\var{\neg P}=\var{P}$, while
$\fr{\neg P}=\fr{P}$,
\item\label{union} if $P$ and $Q$ are formulas, and
$\fr{P}\cap (\var{Q}-\fr{Q})=\emptyset=(\var{P}-\fr{P})\cap\fr{Q}$,
then $P\lor Q$ and $P\land Q$ are formulas, and
$\var{P\lor Q}=\var{P\land Q}=\var{P}\cup\var{Q}$, while
$\fr{P\lor Q}=\fr{P\land Q}=\fr{P}\cup \fr{Q}$.
\end{enumerate}

A \emph{sentence} in MSOL is a formula $P$ 
satisfying $\fr{P}=\emptyset$.

\begin{remark}
In \ref{union}, when we construct the formulas $P\lor Q$
and $P\land Q$, we insist that no variable is free in
one of $P$ and $Q$ and not free in the other.
This is standard (see, for example, \cite[p.~10]{Mar02})
and imposes no real difficulties, since
a variable that is not free can always be relabeled.
For example, $(X_{1}=X_{2}) \land(\exists X_{1}\ |X_{1}|=1)$
is not a formula, but we can rewrite it as
 $(X_{1}=X_{2}) \land (\exists X_{3}\ |X_{3}|=1)$.
\end{remark}

\paragraph{\bf Abbreviations.}
We allow several standard shorthands.
If $P$ and $Q$ are formulas then $P\to Q$ is a shorthand for
$\neg P\lor Q$.
If $x\in \mcal{E}$ and $X\in \mcal{S}$, then
$x\notin X$ is shorthand for $\neg(x\in X)$.
If $p,q\in \mcal{N}$, then $p<q$ is shorthand for
$p\leq q\land \neg(p=q)$.
If $X,Y\in \mcal{S}$, then $X-Y$ is shorthand for
the term $X\cap \overline{Y}$,
and $X\nsubseteq Y$ is shorthand for the formula $\neg(X\subseteq Y)$.
In addition, we are casual with the use of parentheses,
inserting them freely to reduce ambiguity, and
omitting them when this will
cause no confusion.

\subsection{Structures and satisfiability}

We have constructed MSOL as a collection of formally defined strings.
In this section we are going to consider how to
interpret these strings as statements about a set system.
A \emph{structure}, \mcal{M}, consists of a
pair $(E^{\mcal{M}},r^{\mcal{M}})$,
where $E^{\mcal{M}}$ is a finite set and
$r^{\mcal{M}}$ is a function from $\mcal{P}(E^{\mcal{M}})$,
the power set of $E^{\mcal{M}}$, to the non-negative integers.

Let $\mcal{M}=(E^{\mcal{M}},r^{\mcal{M}})$ be
a structure, and let $P$ be a formula in MSOL.
Let $\phi_{\mcal{S}}$ be a function from
$\fr{P}\cap\mcal{S}$ to $\mcal{P}(E^{\mcal{M}})$
and let $\phi_{\mcal{E}}$ be a function from
$\fr{P}\cap\mcal{E}$ to $E^{\mcal{M}}$.
We call the pair $(\phi_{\mcal{S}},\phi_{\mcal{E}})$ an
\emph{interpretation} of $P$.
Note that an interpretation of a sentence necessarily
consists of two empty functions.
We are going to
recursively define what it means for the structure
\mcal{M} to satisfy $P(\phi_{\mcal{S}},\phi_{\mcal{E}})$.

First, we create a correspondence between terms in
\mcal{E}, \mcal{S}, and \mcal{N},
and elements of $E^{\mcal{M}}$, subsets of $E^{\mcal{M}}$,
and non-negative integers.
If $x_{i}$ is a term in \mcal{E}, and $x_{i}$ is in the domain of
$\phi_{\mcal{E}}$, then the notation\footnote{
Technically, we should write $x_{i}^{(\mcal{M},\phi_{\mcal{E}})}$,
since the element corresponding to $x_{i}$ depends
on the interpretation as well as the structure.
}
$x_{i}^{\mcal{M}}$ stands for
$\phi_{\mcal{E}}(x_{i})$.
Similarly, if $X$ is a term in \mcal{S}, and
$\var{X}\subseteq\operatorname{Dom}(\phi_{\mcal{S}})\cup
\operatorname{Dom}(\phi_{\mcal{E}})$,
then $X^{\mcal{M}}$ is the corresponding subset of $E^{\mcal{M}}$,
recursively defined as follows:
\begin{enumerate}[label=(\arabic*)]
\item if $X=E$, then $X^{\mcal{M}}=E^{\mcal{M}}$,
and if $X=\emptyset$, then $X^{\mcal{M}}$ is the empty subset,
\item if $X$ is the variable $X_{i}$, then $X_{i}^{\mcal{M}}=\phi_{\mcal{S}}(X_{i})$,
\item if $X=\{x_{i}\}$ for some variable $x_{i}$, then
$X^{\mcal{M}}=\{\phi_{\mcal{E}}(x_{i})\}$,
\item if $X=\overline{Y}$, for some $Y\in \mcal{S}$, then
$X^{\mcal{M}}=E^{\mcal{M}}-Y^{\mcal{M}}$,
and if $X$ is equal, respectively, to $Y\cup Z$ or
$Y\cap Z$, where $Y,Z\in\mcal{S}$, then
$X^{\mcal{M}}$ is, respectively,
$Y^{\mcal{M}}\cup Z^{\mcal{M}}$ or
$Y^{\mcal{M}}\cap Z^{\mcal{M}}$.
\end{enumerate}
Now let $p$ be a term in \mcal{N} such that
$\var{p}\subseteq\operatorname{Dom}(\phi_{\mcal{S}})\cup
\operatorname{Dom}(\phi_{\mcal{E}})$.
Then $p^{\mcal{M}}$ is the corresponding non-negative
integer, defined as follows:
\begin{enumerate}[label=(\arabic*)]
\item if $p$ is a constant in \mcal{N}, then $p^{\mcal{M}}$ is the
corresponding non-negative integer,
\item if $p$ is $|X|$ or $r(X)$, where $X$ is a term in
\mcal{S}, then $p^{\mcal{M}}$ is, respectively,
$|X^{\mcal{M}}|$, or $r^{\mcal{M}}(X^{\mcal{M}})$,
\item if $p$ is $q+r$, for some terms
$q,r\in \mcal{N}$, then $p^{\mcal{M}}$ is
$q^{\mcal{M}}+r^{\mcal{M}}$.
\end{enumerate}

Now we are able to recursively define when \mcal{M}
\emph{satisfies} $P(\phi_{\mcal{S}},\phi_{\mcal{E}})$.
First we consider the case that $P$ is an atomic formula:
\begin{enumerate}[label=(\arabic*)]
\item if $P$ is $x=y$, then $P(\phi_{\mcal{S}},\phi_{\mcal{E}})$
is satisfied if $x^{\mcal{M}}=y^{\mcal{M}}$,
\item if $P$ is, respectively, $X=Y$ or $X\subseteq Y$, then
$P(\phi_{\mcal{S}},\phi_{\mcal{E}})$ is satisfied if, respectively,
$X^{\mcal{M}}=Y^{\mcal{M}}$ or $X^{\mcal{M}}\subseteq Y^{\mcal{M}}$,
\item if $P$ is, respectively, $p=q$ or $p\leq q$, then
$P(\phi_{\mcal{S}},\phi_{\mcal{E}})$ is
satisfied if, respectively, $p^{\mcal{M}}=q^{\mcal{M}}$ or
$p^{\mcal{M}}\leq q^{\mcal{M}}$,
\item if $P$ is $x\in X$, then $P(\phi_{\mcal{S}},\phi_{\mcal{E}})$
is satisfied if $x^{\mcal{M}}\in X^{\mcal{M}}$.
\end{enumerate}

Next we consider the case that $P$ is not atomic:
\begin{enumerate}[label=(\arabic*)]
\item if $P=\exists X_{i} Q$, then $P(\phi_{\mcal{S}},\phi_{\mcal{E}})$
is satisfied if there is some subset $X_{i}'\subseteq E^{\mcal{M}}$
such that $Q(\phi_{\mcal{S}}\cup(X_{i},X_{i}'),\phi_{\mcal{E}})$
is satisfied;
and if $P=\forall X_{i} Q$, then $P(\phi_{\mcal{S}},\phi_{\mcal{E}})$
is satisfied if 
$Q(\phi_{\mcal{S}}\cup(X_{i},X_{i}'),\phi_{\mcal{E}})$
is satisfied for every subset
$X_{i}'\subseteq E^{\mcal{M}}$,
\item if $P=\exists x_{i} Q$, then $P(\phi_{\mcal{S}},\phi_{\mcal{E}})$
is satisfied if there is some element $x_{i}'\in E^{\mcal{M}}$
such that $Q(\phi_{\mcal{S}},\phi_{\mcal{E}}\cup(x_{i},x_{i}'))$
is satisfied;
and if $P=\forall x_{i} Q$, then $P(\phi_{\mcal{S}},\phi_{\mcal{E}})$
is satisfied if 
$Q(\phi_{\mcal{S}},\phi_{\mcal{E}}\cup(x_{i},x_{i}'))$
is satisfied for every element
$x_{i}'\in E^{\mcal{M}}$,
\item if $P=\neg Q$ is a formula, then
$P(\phi_{\mcal{S}},\phi_{\mcal{E}})$ is satisfied if
$Q(\phi_{\mcal{S}},\phi_{\mcal{E}})$ is not satisfied,
\item if $P=Q\lor R$, then $P(\phi_{\mcal{S}},\phi_{\mcal{E}})$
is satisfied if either
$Q(\phi_{\mcal{S}}|_{\fr{Q}\cap \mcal{S}},\phi_{\mcal{E}}|_{\fr{Q}\cap \mcal{E}})$
or
$R(\phi_{\mcal{S}}|_{\fr{R}\cap \mcal{S}},\phi_{\mcal{E}}|_{\fr{R}\cap \mcal{E}})$
is satisfied; and if $P=Q\land R$, then $P(\phi_{\mcal{S}},\phi_{\mcal{E}})$
is satisfied if both
$Q(\phi_{\mcal{S}}|_{\fr{Q}\cap \mcal{S}},\phi_{\mcal{E}}|_{\fr{Q}\cap \mcal{E}})$
and
$R(\phi_{\mcal{S}}|_{\fr{R}\cap \mcal{S}},\phi_{\mcal{E}}|_{\fr{R}\cap \mcal{E}})$
are satisfied.
\end{enumerate}

Let \mcal{M} be a structure, and let
$P$ be a sentence in MSOL.
We say that \mcal{M} satisfies $P$ if it satisfies
$P(\emptyset,\emptyset)$; that is, if it satisfies
$P$ under the empty interpretation.
If \mcal{T} is a set of sentences, then
\mcal{M} \emph{satisfies} \mcal{T} if it satisfies every sentence
in \mcal{T}.

\subsection{\PH}
\label{Mlogic}
Now we describe \PH\ as a set of formulas from MSOL.
Let $a$ be a variable.
Note that $\neg \exists a P$ is equivalent to
$\forall a \neg P$, in the sense that
a structure satisfies one of these formulas if and only if it
satisfies both.
Similarly, $\neg \forall a P$ is equivalent to
$\exists a \neg P$.
Now suppose that $P\lor (\exists a Q)$
is a formula.
Then $a$ is not free in $P$, and
$P\lor (\exists a Q)$ is equivalent to
$\exists a (P\lor Q)$.
Similarly, $P\land (\forall a Q)$ is equivalent to
$\forall a (P\land Q)$.
This discussion means that every formula in
MSOL is equivalent to a formula of the form
$Q_{1}a_{1}\cdots Q_{t}a_{t} P$, where
each $Q_{i}$ is in $\{\exists,\forall\}$, each
$a_{i}$ is a variable, and $P$ is a formula that
contains no quantifiers.

A formula of the form
$\exists x Q_{1}a_{1}\cdots Q_{t}a_{t} P$, where $x$ is a variable
in \mcal{E}, is equivalent to
\[
\exists X Q_{1}a_{1}\cdots Q_{t}a_{t}\forall x (X=\{x\})\to P
\]
where $X$ is a new variable in \mcal{S}.
Similarly, $\forall x Q_{1}a_{1}\cdots Q_{t}a_{t} P$ is equivalent to
$\forall X Q_{1}a_{1}\cdots Q_{t}a_{t}\forall x  (X=\{x\})\to P$.
From this discussion we see that every formula in MSOL
is equivalent to a formula of the form
\[
Q_{i_{1}}X_{i_{1}}\cdots Q_{i_{m}}X_{i_{m}}
Q_{j_{1}}x_{j_{1}}\cdots Q_{j_{n}}x_{j_{n}}P
\]
where $X_{i_{1}},\ldots, X_{i_{m}}$ and
$x_{j_{1}},\ldots, x_{j_{n}}$ are variables in
\mcal{S} and \mcal{E} respectively, where each $Q_{k}$ is
in $\{\exists,\forall\}$, and $\var{P}=\fr{P}$
(c.f.\ \cite[p.~39]{EF06}).
We say that this formula is in \emph{\PH} if
$\{Q_{i_{1}},\ldots, Q_{i_{m}}\}$ is either
$\{\exists\}$ or $\{\forall\}$, and similarly
$\{Q_{j_{1}},\ldots, Q_{j_{n}}\}$ is either
$\{\exists\}$ or $\{\forall\}$.
That is, \PH\ is the collection of formulas in MSOL that
are equivalent to a formula of the form
$Q_{i_{1}}X_{i_{1}}\cdots Q_{i_{m}}X_{i_{m}}
Q_{j_{1}}x_{j_{1}}\cdots Q_{j_{n}}x_{j_{n}}P$,
where $P$ is quantifier-free, and
$Q_{k}=Q_{l}$ for all $k,l\in\{i_{1},\ldots, i_{m}\}$
and all $k,l\in \{j_{1},\ldots, j_{n}\}$.

\section{Matroid axioms}
\label{interpret}

In this section we show that \PH\ is expressive enough
to make natural statements about matroids.
Some common axiom schemes for matroids can be expressed using sentences
in \PH.
Furthermore, if $N$ is a fixed matroid, then there is a
sentence in \PH\ that characterizes having a minor isomorphic to $N$.
Throughout the section, we will let
$\mcal{M}=(E^{\mcal{M}},r^{\mcal{M}})$ be a structure
(recall this implies $E^{\mcal{M}}$ is finite).

\subsection{Axioms}
\label{axioms}
We consider a matroid to be a finite set equipped with
a function obeying the rank axioms.
Thus $(E^{\mcal{M}},r^{\mcal{M}})$ is a matroid if and only if
\mcal{M} satisfies the following
sentences in \PH.
\begin{enumerate}[label={\bf R\arabic*}]
\item
$\forall X_{1}\ r(X_{1})\leq |X_{1}|$
\item
$\forall X_{1}\forall X_{2}\ 
X_{1}\subseteq X_{2} \rightarrow r(X_{1})\leq r(X_{2})$
\item
$\forall X_{1} \forall X_{2}\ 
r(X_{1}\cup X_{2})+r(X_{1}\cap X_{2})\leq r(X_{1})+r(X_{2})$
\end{enumerate}

Let $I(X)$ be shorthand for
$r(X)=|X|$, where $X\in \mcal{S}$.
Then $(E^{\mcal{M}},r^{\mcal{M}})$ is a matroid with
$\{X\subseteq E^{\mcal{M}}\mid r^{\mcal{M}}(X)=|X|\}$
as its family of independent sets if and only if
\mcal{M} satisfies the following sentences.
\begin{enumerate}[label={\bf I\arabic*}]
\item $I(\emptyset)$
\item $\forall X_{1}\forall X_{2}\ I(X_{2})\land X_{1}\subseteq X_{2}
\to I(X_{1})$
\item
$\forall X_{1}\forall X_{2}\exists x_{1}\ 
I(X_{1})\land I(X_{2})\land |X_{1}|<|X_{2}|\to$\\
\rule{0pt}{0pt}\hfill
$x_{1}\notin X_{1}\land x_{1}\in X_{2}\land  I(X_{1}\cup \{x_{1}\})$
\end{enumerate}

Let $B(X)$ be shorthand for $r(X)=|X|\land r(X)=r(E)$,
where $X\in \mcal{S}$.
Then $(E^{\mcal{M}},r^{\mcal{M}})$ is a matroid with
$\{X\subseteq E^{\mcal{M}}\mid r^{\mcal{M}}(X)=|X|=r^{\mcal{M}}(E^{\mcal{M}})\}$
as its family of bases if and only if
\mcal{M} satisfies the following sentences.
\begin{enumerate}[label={\bf B\arabic*}]
\item $\exists X_{1}\ B(X_{1})$
\item $\forall X_{1}\forall X_{2}\forall X_{3}\exists x_{1}\ 
B(X_{1})\land B(X_{2})\land |X_{3}|=1\land
X_{3}\subseteq X_{1}\land$\\
\rule{0pt}{0pt}\hfill
$X_{3}\nsubseteq X_{2} \to
x_{1}\notin X_{1}\land x_{1}\in X_{2}\land
B((X_{1}-X_{3})\cup \{x_{1}\})$
\end{enumerate}
Note that the natural form of the basis-exchange axiom
is
\begin{quote}
`for every basis $B$, and for every basis $B'$,
and for every element $x\in B-B'$, there exists
an element $y\in B'-B$ such that $\ldots$'
\end{quote}
This statement cannot be expressed directly in \PH.
We sidestep this problem by using the set variable, $X_{3}$,
to represent the single element $x$.

Let $S(X)$ be shorthand for $r(X)=r(E)$.
Then $(E^{\mcal{M}},r^{\mcal{M}})$ is a matroid with
$\{X\subseteq E^{\mcal{M}}\mid r^{\mcal{M}}(X)=r^{\mcal{M}}(E^{\mcal{M}})\}$
as its set of spanning sets if and only if
\mcal{M} satisfies the following sentences.
\begin{enumerate}[label={\bf S\arabic*}]
\item $\exists X_{1}\ S(X_{1})$
\item $\forall X_{1}\forall X_{2}\ S(X_{1})\land X_{1}\subseteq X_{2}
\to S(X_{2})$
\item $\forall X_{1}\forall X_{2}\exists x_{1}\ 
S(X_{1})\land S(X_{2})\land |X_{1}|<|X_{2}|\to $\\
\rule{0pt}{0pt}\hfill
$x_{1}\notin X_{1}\land x_{1}\in X_{2}\land
S(X_{2}-\{x_{1}\})$
\end{enumerate}

\subsection{Axiomatizing $\mathrm{GF}(q)$-representability}
\PH\ is strong enough so that representability over any
finite field can be axiomatized with a finite number of sentences,
assuming that Rota's conjecture is true.
This assumption implies that there is a finite number
of excluded minors for \gf{q}\dash representability, for
any prime power $q$.
In this section we prove the following result.

\begin{lemma}
\label{Rota}
Assume that Rota's conjecture is true.
For every finite field \gf{q}, there is a finite set of
sentences, \mcal{Q}, in \PH, with the following property:
the structure $\mcal{M}=(E^{\mcal{M}},r^{\mcal{M}})$
is a \gf{q}\dash representable matroid if and only if \mcal{M} satisfies
$\{{\bf R1},{\bf R2},{\bf R3}\}\cup \mcal{Q}$.
\end{lemma}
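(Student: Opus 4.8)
The plan is to reduce the statement to Rota's conjecture together with the minor-detecting sentences supplied by Proposition~\ref{1}. Assuming Rota's conjecture, the class of \gf{q}\dash representable matroids is precisely the class of matroids having no minor isomorphic to any member of a finite list of excluded minors $N_{1},\ldots, N_{k}$. Thus a structure \mcal{M} satisfying {\bf R1}, {\bf R2}, {\bf R3} is \gf{q}\dash representable if and only if it has no $N_{i}$\dash minor for any $i$. The goal is therefore to express ``\mcal{M} has no $N_{i}$\dash minor'' by a single sentence in \PH, and to collect these $k$ sentences into \mcal{Q}.

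First I would invoke Proposition~\ref{1} to obtain, for each fixed matroid $N_{i}$, a sentence $S_{i}$ that holds in a matroid exactly when it has an $N_{i}$\dash minor. The crucial point is the syntactic shape of $S_{i}$: detecting an $N_{i}$\dash minor amounts to existentially guessing a set $C$ to contract and distinct elements $e_{1},\ldots, e_{n}$ (where $n=|N_{i}|$) forming the minor, and then checking, for each of the finitely many subsets $S\subseteq\{1,\ldots,n\}$, the rank equation $r(\{e_{j}\mid j\in S\}\cup C)=r(C)+r_{N_{i}}(S)$, together with the atomic conditions $e_{j}\neq e_{l}$. Since $N_{i}$ is fixed, this is a finite, quantifier-free conjunction, so $S_{i}$ can be written in prenex form with every set variable and every element variable carried by an existential quantifier, with the set quantifiers placed first. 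Consequently $S_{i}$ lies in \PH.

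Next I would take negations. The sentence $\neg S_{i}$ is equivalent to a prenex formula in which every set variable and every element variable is universally quantified, so $\neg S_{i}$ is again in \PH, and it holds in a matroid precisely when that matroid has no $N_{i}$\dash minor. Setting $\mcal{Q}=\{\neg S_{1},\ldots,\neg S_{k}\}$ gives a finite set of sentences in \PH. A structure then satisfies $\{{\bf R1},{\bf R2},{\bf R3}\}\cup\mcal{Q}$ if and only if it is a matroid (by the rank axioms) that avoids every excluded minor $N_{i}$ (by \mcal{Q}), which by Rota's conjecture is exactly the condition that it be \gf{q}\dash representable.

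The main obstacle is the middle step: verifying that the minor-detecting sentence of Proposition~\ref{1} genuinely falls inside the \PH\ fragment, i.e.\ that it can be arranged so that all set quantifiers agree and all element quantifiers agree. This rests on the fact that ``has an $N$\dash minor'' is an inherently existential property whose witness (the contraction set and the elements of the minor) is described by a uniform block of existential quantifiers followed by a quantifier-free rank condition; the finiteness of $N$ is what keeps that condition quantifier-free, and hence what keeps the whole sentence within \PH. Everything else---assembling \mcal{Q} and matching it against the excluded-minor characterization---is routine once this syntactic point is secured.
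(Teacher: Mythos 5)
Your proposal is correct and follows essentially the same route as the paper: the paper also derives Lemma~\ref{Rota} immediately from Proposition~\ref{1} (whose sentence $\mbf{S}_{N}$ is a purely existential prenex formula over a quantifier-free rank condition) together with the observation, recorded in Corollary~\ref{cor}, that negating such a sentence yields a purely universal sentence still in \PH, so that $\mcal{Q}$ consists of the negated minor-detecting sentences for the finitely many excluded minors guaranteed by Rota's conjecture.
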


Indeed, any minor-closed class with finitely
many excluded minors can be finitely axiomatized
in \PH\ (Corollary~\ref{cor}).
However, the converse is not obviously true.
There may be a minor-closed class with infinitely
many excluded minors that can be finitely
axiomatized in \PH.

Lemma~\ref{Rota} follows
immediately from the next two results.

\begin{proposition}
\label{1}
Let $N$ be a matroid.
There is a sentence, $\mbf{S}_{N}$, in \PH, such that
the structure
$\mcal{M}=(E^{\mcal{M}},r^{\mcal{M}})$ is a matroid with an $N$\dash minor
if and only if \mcal{M} satisfies
$\{{\bf R1},{\bf R2},{\bf R3},\mbf{S}_{N}\}$.
\end{proposition}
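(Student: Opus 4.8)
The plan is to construct the sentence $\mbf{S}_N$ so that it directly asserts the existence of a minor isomorphic to $N$, exploiting the fact that a matroid $M$ has an $N$-minor if and only if there are disjoint sets $C$ (to be contracted) and $D$ (to be deleted) such that the minor $M/C \backslash D$ on ground set $E - (C \cup D)$ is isomorphic to $N$. Since $N$ is a \emph{fixed} matroid with, say, ground set $\{e_1,\dots,e_k\}$, I can name its elements using $k$ bound element variables and describe its rank function entirely by a finite conjunction of rank conditions. The key observation is that contraction and deletion have simple rank descriptions: for $X \subseteq E - (C\cup D)$, the rank of $X$ in $M/C\backslash D$ equals $r_M(X \cup C) - r_M(C)$. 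So every rank statement about the minor translates into a statement about $r$ applied to unions of subsets of $E$, which MSOL can express.

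First I would fix an enumeration $e_1,\dots,e_k$ of $E(N)$ and record, for each subset $A \subseteq \{1,\dots,k\}$, the integer $r_N(A)$; there are only $2^k$ such values, a finite amount of data. Next I would write $\mbf{S}_N$ in prenex form beginning with existential quantifiers $\exists X_1 \exists X_2 \exists x_1 \cdots \exists x_k$, where $X_1$ plays the role of the contraction set $C$, $X_2$ the deletion set $D$, and the element variables $x_1,\dots,x_k$ pick out the $k$ ground-set elements of the putative $N$-minor. The quantifier-free matrix would then assert: the $x_i$ are distinct and lie outside $X_1 \cup X_2$; the sets $X_1$, $X_2$, and $\{x_1,\dots,x_k\}$ partition $E$ (this guarantees the minor is on exactly the chosen elements and is isomorphic to $N$ rather than merely containing it, matching the ``$N$-minor'' requirement through the partition); and, crucially, that for every one of the $2^k$ subsets $A$, the contracted-deleted rank $r\big(\bigcup_{i\in A}\{x_i\} \cup X_1\big) - r(X_1)$ equals the constant $r_N(A)$. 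Each of these $2^k$ rank equalities is an atomic formula in $\mcal{N}$ (after moving $r(X_1)$ to the other side to stay within non-negative-integer addition), so their conjunction is quantifier-free, and the whole sentence has all set-quantifiers existential and all element-quantifiers existential, placing it squarely in \PH.

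Because Proposition~\ref{1} requires the structure to be a matroid ``with an $N$-minor'' precisely when it satisfies $\{{\bf R1},{\bf R2},{\bf R3},\mbf{S}_N\}$, I would verify the two directions of the correspondence: satisfying ${\bf R1}$--${\bf R3}$ makes $\mcal{M}$ a matroid, and then the existential witnesses for $\mbf{S}_N$ give exactly a partition whose associated minor has the rank function of $N$, hence is isomorphic to $N$; conversely an $N$-minor supplies the witnesses. This is the routine verification that contraction-deletion rank behaves as claimed and that matching rank functions on a common labelled ground set forces isomorphism.

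The main obstacle I anticipate is not mathematical depth but bookkeeping within the syntactic constraints of \PH\ and MSOL as defined above. In particular, the matrix must be genuinely quantifier-free, so I cannot use an inner quantifier to say ``$X_2 = \overline{X_1 \cup \{x_1,\dots,x_k\}}$''; instead I must encode the partition using only the term-forming operations $\cup$, $\cap$, $\overline{\cdot}$ and the relations $=$, $\subseteq$, $\in$. I also need to confirm that expressing each rank value $r_N(A)$ as a constant and writing the equality $r(\cdots) = r(X_1) + c_A$ stays inside \mcal{N} (addition of non-negative integer terms is allowed, subtraction is not, which is why I shift $r(X_1)$ to the right). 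Finally I must double-check that assembling all these atomic pieces via $\land$ respects the free-variable side condition in rule~\ref{union}, though since every variable involved is free throughout the matrix this causes no difficulty.
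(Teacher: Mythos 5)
Your construction is essentially the paper's: the paper also existentially quantifies a contraction set and $|E(N)|$ element variables, asserts disjointness and distinctness, and pins down the minor by the $2^{|E(N)|}$ atomic equalities $r\bigl(X_{1}\cup\bigcup_{i\in S}\{x_{i}\}\bigr)=r(X_{1})+r_{N}(S)$, with all quantifiers existential so the sentence lies in \PH. The only cosmetic differences are that you carry a redundant deletion-set variable and a partition condition where the paper simply restricts to the chosen elements, and the paper additionally requires $X_{1}$ to be independent, which is harmless but not needed since the contraction rank identity holds for arbitrary contraction sets.
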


\begin{proof}
Let the ground set of $N$ be $T=\{1,\ldots, m\}$.
For every subset $S\subseteq T$, let $r_{N}(S)$ denote the rank
of $S$ in $N$.
Let $P_{N}$ be the formula
\begin{multline*}
(r(X_{1})=|X_{1}|) \land
\left(X_{1}\cap \bigcup_{i=1}^{m}\{x_{i}\} =\emptyset\right) \land
\left(\left|\bigcup_{i=1}^{m}\{x_{i}\}\right|=m\right) \land\\
\bigwedge_{S\subseteq T}
r\left(X_{1}\cup \bigcup_{i \in S} \{x_{i}\}\right)
=r(X_{1})+r_{N}(S).
\end{multline*}
Assume that \mcal{M} satisfies
$\{{\bf R1},{\bf R2},{\bf R3}\}$, so that $(E^{\mcal{M}},r^{\mcal{M}})$
is a matroid.
Then \mcal{M} satisfies $P_{N}(\phi_{\mcal{S}},\phi_{\mcal{E}})$
if and only if $\phi_{\mcal{S}}(X_{1})$ is
independent, the set
$\{\phi_{\mcal{E}}(x_{1}),\ldots, \phi_{\mcal{E}}(x_{m})\}$
contains $m$ distinct elements and is disjoint from
$\phi_{\mcal{S}}(X_{1})$, and the matroid produced by
contracting $\phi_{\mcal{S}}(X_{1})$ and restricting to
$\{\phi_{\mcal{E}}(x_{1}),\ldots, \phi_{\mcal{E}}(x_{m})\}$
has the same rank function as $N$.
Thus $\mbf{S}_{N}=\exists X_{1}\exists x_{1}\cdots \exists x_{m}\ P_{N}$
is the desired sentence.
\end{proof}

\begin{corollary}
\label{cor}
If \mcal{N} is a minor-closed class of matroids with
a finite number of excluded minors, then there is a
finite set of sentences, $\mbf{S}(\mcal{N})$, in
\PH, with the following property:
the structure $\mcal{M}=(E^{\mcal{M}},r^{\mcal{M}})$ is a
matroid in \mcal{N} if and only if
\mcal{M} satisfies
$\{{\bf R1},{\bf R2},{\bf R3}\}\cup \mbf{S}(\mcal{N})$.
\end{corollary}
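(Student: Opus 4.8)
The plan is to leverage Proposition~\ref{1}, which already provides, for any single matroid $N$, a sentence $\mbf{S}_{N}$ in \PH\ that detects the presence of an $N$\dash minor. The key observation is that a minor-closed class \mcal{N} is characterized by the \emph{absence} of each of its excluded minors: a matroid $M$ lies in \mcal{N} if and only if $M$ has no minor isomorphic to any excluded minor. Since \mcal{N} has only finitely many excluded minors, call them $N_{1},\ldots, N_{k}$, membership in \mcal{N} is equivalent to satisfying $\neg \mbf{S}_{N_{1}} \land \cdots \land \neg \mbf{S}_{N_{k}}$, conjoined with the rank axioms.

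First I would set $\mbf{S}(\mcal{N}) = \{\neg\mbf{S}_{N_{1}}, \ldots, \neg\mbf{S}_{N_{k}}\}$, where each $\mbf{S}_{N_{i}}$ is the sentence supplied by Proposition~\ref{1} for the excluded minor $N_{i}$. By that proposition, once \mcal{M} satisfies $\{\mbf{R1},\mbf{R2},\mbf{R3}\}$ so that $(E^{\mcal{M}},r^{\mcal{M}})$ is a matroid, \mcal{M} satisfies $\mbf{S}_{N_{i}}$ precisely when $M$ has an $N_{i}$\dash minor. Hence \mcal{M} satisfies $\neg\mbf{S}_{N_{i}}$ exactly when $M$ has no $N_{i}$\dash minor, and \mcal{M} satisfies all of $\mbf{S}(\mcal{N})$ exactly when $M$ avoids every excluded minor, i.e.\ when $M \in \mcal{N}$. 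This gives the desired biconditional immediately.

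The one technical point requiring care is verifying that each $\neg\mbf{S}_{N_{i}}$ is genuinely a sentence in \PH. Recall from Proposition~\ref{1} that $\mbf{S}_{N_{i}}$ has the form $\exists X_{1}\exists x_{1}\cdots\exists x_{m}\, P_{N_{i}}$, with $P_{N_{i}}$ quantifier-free; so all set variables and all element variables carry existential quantifiers, placing $\mbf{S}_{N_{i}}$ in \PH. Negating it and pushing the negation inward (using $\neg\exists a\, Q \equiv \forall a\,\neg Q$, as noted in the discussion preceding the definition of \PH) converts every existential into a universal, yielding $\forall X_{1}\forall x_{1}\cdots\forall x_{m}\,\neg P_{N_{i}}$. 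All set variables are now universal and all element variables are universal, so $\neg\mbf{S}_{N_{i}}$ is again in \PH. I expect this to be the only real obstacle, and it is a mild one: it amounts to confirming that \PH\ is closed under negation, which follows because negation swaps the single quantifier type used for set variables and the single type used for element variables, preserving the required uniformity within each block.

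Finally, I would note that the set $\mbf{S}(\mcal{N})$ is finite because the excluded-minor set is finite by hypothesis, completing the argument.
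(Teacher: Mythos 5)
Your proposal is correct and matches the paper's own proof: the paper likewise takes $\mbf{S}(\mcal{N})$ to be the set of negations $\neg\mbf{S}_{N_{i}}$ of the sentences from Proposition~\ref{1} for the excluded minors, noting that the negation of a sentence in \PH\ is equivalent to another sentence in \PH. Your additional verification that negation flips each uniform quantifier block is exactly the justification the paper leaves implicit.
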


\begin{proof}
Let $N_{1},\ldots, N_{t}$ be the list of excluded
minors for \mcal{M}.
Notice that the negation of a sentence in \PH\
is equivalent to another sentence in \PH.
We let
$
\mbf{S}(\mcal{N})=\{\neg\mbf{S}_{N_{1}},\ldots,
\neg\mbf{S}_{N_{t}}\}.
$
\end{proof}

\section{Kinser matroids}
\label{kinssect}

In this section we construct an infinite family of matroids,
which we call \emph{Kinser matroids}.
Let $r \geq 4$ be an integer.
Then \kin{r} is a rank\dash $r$ matroid
with $r^{2}-3r+4$ elements.
For our purposes, the most important property of Kinser matroids
is that they are representable over any infinite
field, but can be made non-representable
by relaxing a single circuit-hyperplane.
To prove this fact, we are going to use the family
of inequalities discovered by Kinser~\cite{Kin11}.

\begin{lemma}
\label{kinser}
Let $M$ be a matroid that is representable over a field.
If $X_{1},\ldots, X_{n}$ is any collection of subsets of $E(M)$,
where $n\geq 4$, then
\begin{multline*}
r(X_{1}\cup X_{2})+r(X_{1}\cup X_{3}\cup X_{n})+r(X_{3})
+\sum_{i=4}^{n}(r(X_{i})+r(X_{2}\cup X_{i-1}\cup X_{i}))\leq\\
r(X_{1}\cup X_{3})+r(X_{1}\cup X_{n})+r(X_{2}\cup X_{3})+
\sum_{i=4}^{n}(r(X_{2}\cup X_{i})+r(X_{i-1}\cup X_{i})).
\end{multline*}
\end{lemma}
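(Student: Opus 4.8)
The plan is to reduce the statement to a purely linear-algebraic inequality and then apply the subspace-arrangement inequalities of Kinser~\cite{Kin11}. First I would fix a representation of $M$ over a field $\mbb{F}$, regarding each element of $E(M)$ as a vector in some $\mbb{F}$-vector space. For a subset $X\subseteq E(M)$ let $\langle X\rangle$ denote the span of the corresponding vectors; then $r(X)=\dim\langle X\rangle$, and since $\langle X\cup Y\rangle=\langle X\rangle+\langle Y\rangle$ we have $r(X_{i_1}\cup\cdots\cup X_{i_k})=\dim(\langle X_{i_1}\rangle+\cdots+\langle X_{i_k}\rangle)$. Writing $V_i=\langle X_i\rangle$, the displayed inequality becomes a relation among the dimensions of sums of the \emph{arbitrary} subspaces $V_1,\ldots,V_n$, which is exactly the form of Kinser's inequality for subspace arrangements. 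So the content of the lemma is that these inequalities hold for every finite family of subspaces of a vector space over a field.

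To anchor the argument and to see that representability is genuinely needed, I would first check the base case $n=4$, which reads
\begin{multline*}
r(X_1\cup X_2)+r(X_1\cup X_3\cup X_4)+r(X_3)+r(X_4)+r(X_2\cup X_3\cup X_4)\leq\\
r(X_1\cup X_3)+r(X_1\cup X_4)+r(X_2\cup X_3)+r(X_2\cup X_4)+r(X_3\cup X_4).
\end{multline*}
Applying the substitution $(A_1,A_2,A_3,A_4)=(X_3,X_4,X_1,X_2)$ to Ingleton's inequality
\begin{multline*}
r(A_1)+r(A_2)+r(A_3\cup A_4)+r(A_1\cup A_2\cup A_3)+r(A_1\cup A_2\cup A_4)\leq\\
r(A_1\cup A_2)+r(A_1\cup A_3)+r(A_1\cup A_4)+r(A_2\cup A_3)+r(A_2\cup A_4)
\end{multline*}
recovers the $n=4$ case term for term. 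Since Ingleton's inequality fails for some non-representable matroids, this already confirms that the lemma cannot follow from submodularity alone.

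For general $n$ I would prove the subspace inequality by the direct linear-algebra argument of~\cite{Kin11}, which rewrites each rank term via the modular law $\dim(U+W)=\dim U+\dim W-\dim(U\cap W)$ and then combines a sequence of dimension estimates for sums and intersections of the $V_i$ along the chain $X_4,X_5,\ldots,X_n$; the use of a field (rather than an arbitrary matroid) enters exactly as it does in the proof of Ingleton. I expect the construction of this global estimate to be the main obstacle.

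In particular, the inequality cannot be obtained by a naive induction on $n$. If one tries to deduce the case $n+1$ from the case $n$ by bounding the per-step increment, one is led to require
\begin{multline*}
r(X_1\cup X_3\cup X_{n+1})+r(X_{n+1})+r(X_2\cup X_n\cup X_{n+1})+r(X_1\cup X_n)\leq\\
r(X_1\cup X_3\cup X_n)+r(X_1\cup X_{n+1})+r(X_2\cup X_{n+1})+r(X_n\cup X_{n+1}),
\end{multline*}
and this is false for general subspaces: taking $V_1=V_2=0$, $V_3=V_n=\langle v\rangle$ and $V_{n+1}=\langle w\rangle$ with $v,w$ linearly independent gives left side $6$ and right side $5$. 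Thus the increment is not a termwise consequence of Ingleton together with submodularity, and representability must be used globally across all of $X_1,\ldots,X_n$ at once, which is precisely what Kinser's construction accomplishes.
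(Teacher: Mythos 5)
Your proposal is correct and follows essentially the same route as the paper: fix a representation, observe that $r(X_{i_1}\cup\cdots\cup X_{i_k})$ equals the dimension of the sum of the subspaces $\langle X_{i_j}\rangle$, and then invoke Kinser's subspace-arrangement inequality \cite[Theorem~1]{Kin11}. The additional checks (the $n=4$ case recovering Ingleton, and the failure of a naive induction) are accurate but supplementary; the core reduction is identical.
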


\begin{proof}
This follows immediately from
\cite[Theorem~1]{Kin11}.
\end{proof}

We note here that if $n=4$, then the inequality in
Lemma~\ref{kinser} is identical to Ingleton's
inequality for representable matroids \cite{Ing71}.

As an intermediate step for constructing \kin{r}, we define
a rank\dash $(r+1)$ transversal matroid, $M_{r+1}$.
The transversal system that describes $M_{r+1}$ contains
$r+1$ sets: $\mcal{A}_{1},\ldots, \mcal{A}_{r-1},\mcal{A},\mcal{A}'$.
Let $H_{1},\ldots, H_{r}$ be pairwise disjoint sets
such that
\[
|H_{1}|=\cdots=|H_{r-1}|=r-2
\]
and $H_{r}=\{e,f\}$.
The ground set of $M_{r+1}$ is
$H_{1}\cup\cdots\cup H_{r}$.
Let $\mcal{A}=E(M_{r+1})$, and let $\mcal{A}'=H_{r}$.
For $i\in \{1,\ldots, r-1\}$, let
\[
\mcal{A}_{i}=(H_{1}\cup\cdots\cup H_{r-1})-(H_{i-1}\cup H_{i})
\]
(when appropriate we interpret subscripts modulo $r-1$).
Then $M_{r+1}$ is the transversal matroid
$M[\mcal{A}_{1},\ldots, \mcal{A}_{r-1},\mcal{A},\mcal{A}']$.
We define \kin{r} to be the truncation, $T(M_{r+1})$, of
$M_{r+1}$.
Let $G_{r+1}$ be the bipartite graph that corresponds to the
transversal system
$(\mcal{A}_{1},\ldots, \mcal{A}_{r-1},\mcal{A},\mcal{A}')$.

Note that \kin{4} is a rank\dash $4$ matroid
with $8$ elements, and its non-spanning circuits
are all the $4$\dash element subsets of the form
$H_{i}\cup H_{j}$, where $i\ne j$.
In fact, \kin{4} is also known as the rank\dash $4$
\emph{tipless free spike} (see \cite[page~136]{GOVW02}).

We will use the next result in our proof that Kinser matroids
are representable over infinite fields.

\begin{proposition}
\label{infinity}
Let $r\geq 3$ be an integer.
Let $P$ be the projective geometry $\pg{r-1,\mbb{K}}$,
where \mbb{K} is an infinite field, and let
$S_{1},\ldots, S_{t}$ be a finite collection of proper
subspaces of $P$.
If $S$ is a subspace of $P$ that is not contained in
any of $S_{1},\ldots, S_{t}$, then $S$ is not contained in
$S_{1}\cup\cdots\cup S_{t}$.
\end{proposition}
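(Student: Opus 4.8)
The plan is to prove the contrapositive of a strengthened statement: a subspace $S$ that is contained in the set-theoretic union $S_1 \cup \cdots \cup S_t$ must be contained in one of the $S_i$ individually. The key observation is that over an infinite field $\mbb{K}$, a proper subspace is a ``thin'' subset of $P$ in the following sense: no proper subspace can cover $P$, and more generally a finite union of proper subspaces cannot cover any larger subspace. I would establish this via a counting-and-infinitude argument applied to the points (rank\dash $1$ flats) of $S$.

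First I would reduce to the case where $S$ itself is the whole projective geometry, or at least work internally: since $S$ is a subspace of $P$ isomorphic to $\pg{\dim S - 1, \mbb{K}}$, and each $S \cap S_i$ is a subspace of $S$ that is proper in $S$ (because $S \not\subseteq S_i$ by hypothesis), it suffices to show that $S$ cannot equal $\bigcup_{i=1}^{t} (S \cap S_i)$. Thus I may replace $P$ by $S$ and the $S_i$ by $S \cap S_i$, and assume every $S_i$ is a proper subspace of $P = \pg{d, \mbb{K}}$ and aim to exhibit a point of $P$ lying in none of them. Second, I would set up the covering argument on the level of points. The number of points of $P$ is infinite (since $\mbb{K}$ is infinite), while each proper subspace $S_i$, although possibly also containing infinitely many points, is ``codimension\dash $\geq 1$''. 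The cleanest route is an inductive or direct affine argument: choose a point $p \notin S_1$ (possible since $S_1$ is proper), choose a point $q \notin S_2$, and consider the line through $p$ and $q$; a line meets each proper subspace $S_i$ in either the whole line or at most a bounded number of points. Because $\mbb{K}$ is infinite, a line has infinitely many points, so as long as no single $S_i$ contains the entire line, the finitely many intersection points $S_i \cap \ell$ cannot exhaust $\ell$, yielding a point of $\ell$ outside every $S_i$.

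Concretely, I would argue as follows. Since each $S_i$ is a proper subspace, choose for each $i$ a point $p_i \notin S_i$. Now I seek a line $\ell$ in $P$ such that $\ell \not\subseteq S_i$ for every $i$; for such a line, $\ell \cap S_i$ is a proper subspace of $\ell$, hence a single point (or empty), so $\ell$ meets $\bigcup_i S_i$ in at most $t$ points, and since $|\ell| = |\mbb{K}| + 1 = \infty$, some point of $\ell$ avoids all $S_i$. To obtain such a line, I would fix a point $a \notin S_1 \cup \cdots$ is not directly available, so instead I pick any point $a$ and a second point $b$ with the property that the line $ab$ is not swallowed by any $S_i$; this is guaranteed by choosing $a, b$ generically, using that the union of the (finitely many) proper subspaces together with the ``bad directions'' still cannot cover the infinitely many lines through a fixed point.

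The main obstacle will be the bookkeeping in the last step, namely guaranteeing the existence of a line not contained in any $S_i$. The subtlety is that $S_i$ could be a hyperplane, so many lines lie inside it. The clean way around this is induction on $\dim P$: a proper subspace $S_i$ meets a generic hyperplane $P'$ of $P$ in a subspace that is proper in $P'$, so by the inductive hypothesis applied to $P'$ and the traces $S_i \cap P'$ there is a point of $P'$ outside all $S_i$ — provided no $S_i$ contains $P'$, which I can arrange by choosing $P'$ to avoid the finitely many hyperplanes among the $S_i$ (again using that there are infinitely many hyperplanes through a fixed subspace when $\mbb{K}$ is infinite). The base case $\dim P = 1$ is exactly the line argument above. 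I expect the inductive setup, together with the repeated use of ``infinitely many choices beat finitely many forbidden ones,'' to be where all the real work lies, while each individual step is elementary.
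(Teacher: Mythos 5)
Your argument is correct, but it reaches the conclusion by a different route than the paper. The paper runs a minimal-counterexample argument on $t$: assuming $S\subseteq S_{1}\cup\cdots\cup S_{t}$ with $t$ minimal and no $S_{i}\supseteq S$, minimality immediately yields points $p\in S-(S_{2}\cup\cdots\cup S_{t})$ and $p'\in S-(S_{1}\cup\cdots\cup S_{t-1})$; the line they span lies in $S$, meets each $S_{i}$ in at most one point (since no $S_{i}$ contains both $p$ and $p'$), and so would have at most $t$ points --- contradicting $|\mbb{K}|=\infty$. You instead first reduce to $S=P$ by intersecting each $S_{i}$ with $S$ (a clean and correct step the paper skips), and then induct on dimension, slicing by a hyperplane $P'$ distinct from every $S_{i}$ so that each trace $S_{i}\cap P'$ is proper in $P'$, with the projective line as base case. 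Both proofs bottom out in the same fact --- a line over an infinite field cannot be covered by finitely many points --- but the paper's minimality trick manufactures the needed line in one stroke, whereas your induction needs the auxiliary observation that $P$ has more than $t$ hyperplanes (true since $\mbb{K}$ is infinite, as you note). Your first two ``concrete'' attempts (choosing $p\notin S_{1}$, $q\notin S_{2}$, or choosing $a,b$ ``generically'') do not by themselves guarantee a line avoiding containment in every $S_{i}$, but you correctly identify this gap and your final inductive formulation closes it; that version is a complete and standard proof of the classical fact that a projective space over an infinite field is not a finite union of proper subspaces.
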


\begin{proof}
Assume that the result is false, and that $S_{1},\ldots, S_{t}$
have been chosen so that none of these subsets contains
$S$, and yet $S_{1}\cup\cdots \cup S_{t}$ does.
Assume also that $S_{1},\ldots, S_{t}$ has been chosen so that
$t$ is as small as possible.
The hypotheses imply that $t>1$.
The minimality of $t$ means that there is a point, $p$, in
$S\cap S_{1}$, but not in
$S_{2}\cup\cdots\cup S_{t}$.
The same argument means that there is a point, $p'$,
contained in $(S\cap S_{t})-(S_{1}\cup \cdots\cup S_{t-1})$.
Note that $p\ne p'$ as $p\in S_{1}-S_{t}$ and
$p'\in S_{t}-S_{1}$.
Let $l$ be the line spanned by $p$ and $p'$.
Then $l$ is contained in $S$, but every subspace $S_{i}$
contains at most one point of $l$, for otherwise
$S_{i}$ contains $l$, and hence contains $p$ and $p'$.
Therefore $l$ contains at most $t$ points, contradicting
the fact that \mbb{K} is infinite.
\end{proof}

\begin{proposition}
\label{KinKrep}
Let \mbb{K} be an infinite field.
Then \kin{r} is \mbb{K}\dash representable for any $r \geq 4$.
\end{proposition}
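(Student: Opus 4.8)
The plan is to realise $\kin{r}$ as the truncation of a representable matroid and to show that, over an infinite field, truncation preserves representability by means of a generic projection; Proposition~\ref{infinity} is precisely the tool that makes this projection work.

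First I would verify that the transversal matroid $M_{r+1}$ is $\mbb{K}$\dash representable. It is standard that transversal matroids are representable over every infinite field (cf.\ \cite{Oxl11}), and this can be seen directly: send each element $s$ to the vector of $\mbb{K}^{r+1}$ whose $i$th coordinate is an indeterminate when $s$ belongs to the $i$th set of the presentation and $0$ otherwise, and then specialise the indeterminates so that each of the finitely many subdeterminants that must be nonzero is nonzero. Such a specialisation exists because a finite product of nonzero polynomials takes a nonzero value somewhere over an infinite field. Either way we obtain a representation of $M_{r+1}$, which has rank $r+1$, as a configuration of points in $\pg{r,\mbb{K}}$.

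Next I would invoke $\kin{r}=T(M_{r+1})$ together with the rank formula $r_{T(M)}(X)=\min\{r_M(X),r(M)-1\}$, and argue that this truncation is represented by the image of the configuration above under projection from a sufficiently generic point $p\in\pg{r,\mbb{K}}$. Such a projection maps the configuration into $\pg{r-1,\mbb{K}}$, and the image of a subset $X$ spans a subspace of rank $r_{M_{r+1}}(X\cup\{p\})-1$. If $p$ lies outside every proper flat of $M_{r+1}$, then $r_{M_{r+1}}(X\cup\{p\})$ equals $r_{M_{r+1}}(X)+1$ when $X$ is nonspanning and $r+1$ otherwise, so the image rank is $\min\{r_{M_{r+1}}(X),r\}$; since $r=r(M_{r+1})-1$, this is exactly $r_{\kin{r}}(X)$. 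It therefore remains only to produce a point $p$ avoiding every proper flat of $M_{r+1}$.

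This final step is where Proposition~\ref{infinity} is indispensable, and it is the crux of the argument. The proper flats of $M_{r+1}$ are contained in its finitely many rank\dash $r$ flats, each of which is a proper subspace of $P=\pg{r,\mbb{K}}$. Applying Proposition~\ref{infinity}, with its parameter taken to be $r+1$, to the whole space $P$---which is a subspace not contained in any of these proper subspaces---shows that $P$ is not covered by their union, so a point $p$ outside all of them exists; in particular $p$ is distinct from every representing point, and the projection from $p$ is well defined and represents $\kin{r}$ over $\mbb{K}$. The reason the hypothesis that $\mbb{K}$ is infinite cannot be relaxed lies entirely here: over a finite field a finite union of proper subspaces can cover the whole space, and then the truncation may fail to be representable over that field. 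Granting Proposition~\ref{infinity}, the only remaining work is the routine check that ``$p$ avoids every rank\dash $r$ flat'' is both attainable and sufficient to force the projected rank function to coincide with the truncation.
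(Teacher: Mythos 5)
Your proposal is correct and follows essentially the same route as the paper: represent the transversal matroid $M_{r+1}$ in $\pg{r,\mbb{K}}$, use Proposition~\ref{infinity} with $S$ taken to be the whole space to find a point $p$ outside the span of every non-spanning subset, and obtain $\kin{r}=T(M_{r+1})$ by contracting (equivalently, projecting from) $p$. The only cosmetic differences are that you sketch a direct proof of transversal representability where the paper cites it, and you phrase the free-extension-then-contraction step as a generic projection.
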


\begin{proof}
Certainly $M_{r+1}$ is \mbb{K}\dash representable, as it
is transversal
(see \cite[Corollary~11.2.17]{Oxl11}).
Consider a \mbb{K}\dash representation of $M_{r+1}$
as an embedding of $E(M_{r+1})$ in the projective
space \pg{r,\mbb{K}}.

The non-spanning subsets of $M_{r+1}$ span a finite number of
proper subspaces of \pg{r,\mbb{K}}.
We let $S=\pg{r,\mbb{K}}$, and apply Proposition~\ref{infinity}.
Thus there is a point $p\in \pg{r,\mbb{K}}$ that 
is not spanned by any non-spanning subset
of $E(M_{r+1})$.
Consider the \mbb{K}\dash representable matroid, $M_{r+1}'$,
represented by the subset $E(M_{r+1})\cup p$ of \pg{r,\mbb{K}}.
Then $M_{r+1}'$ is a free extension of $M_{r+1}$; that is,
the only circuits that contain $p$ are spanning circuits.
Contracting $p$ produces the truncation $T(M_{r+1})=\kin{r}$.
Since $M_{r+1}'/p=\kin{r}$ is \mbb{K}\dash representable,
the proof is complete.
\end{proof}

\begin{proposition}
Let $r\geq 4$ be an integer.
Then $H_{s}\cup H_{r}$ is a circuit-hyperplane
of $\kin{r}$ for any $s\in \{1,\ldots, r-1\}$.
\end{proposition}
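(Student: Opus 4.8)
The plan is to exploit the transversal presentation of $M_{r+1}$ to compute ranks via maximum partial transversals, and then transfer the information to $\kin{r}=T(M_{r+1})$ through the truncation identity $r_{\kin{r}}(Y)=\min\{r_{M_{r+1}}(Y),r\}$. Write $X=H_s\cup H_r$, so that $|X|=(r-2)+2=r$. The first step is a catalogue of which presentation sets each element can represent: an element of $H_s$ lies in $\mcal{A}$ and in exactly those $\mcal{A}_i$ with $i\notin\{s,s+1\}$ (indices mod $r-1$), but not in $\mcal{A}'$; while $e$ and $f$ lie only in $\mcal{A}$ and $\mcal{A}'$. Consequently the presentation sets reachable from $X$ are $\mcal{A}$, $\mcal{A}'$, and the $r-3$ sets $\mcal{A}_i$ with $i\notin\{s,s+1\}$, giving $r-1$ sets in total; hence $r_{M_{r+1}}(X)\le r-1$, and equality is attained by the partial transversal $e\mapsto\mcal{A}'$, $f\mapsto\mcal{A}$, together with a bijection from $r-3$ elements of $H_s$ onto those $\mcal{A}_i$. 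Therefore $r_{\kin{r}}(X)=r-1=|X|-1$, so $X$ is dependent in $\kin{r}$.

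Next I would verify that $X$ is a circuit of $\kin{r}$ by checking that every $(r-1)$-element subset is independent; since independence is hereditary this settles all proper subsets. If we delete an element of $H_r$ (say $f$), then the $r-2$ sets $\{\mcal{A}\}\cup\{\mcal{A}_i:i\notin\{s,s+1\}\}$ remain reachable from $H_s$, which is exactly enough to match all $r-2$ elements of $H_s$, and together with $e\mapsto\mcal{A}'$ this is a partial transversal of size $r-1$. If instead we delete an element of $H_s$, the remaining $r-3$ elements of $H_s$ match to distinct $\mcal{A}_i$, and adjoining $e\mapsto\mcal{A}'$ and $f\mapsto\mcal{A}$ again gives a partial transversal of size $r-1$. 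In both cases the subset has rank $r-1$ in $M_{r+1}$, hence in $\kin{r}$, so it is independent; combined with the previous paragraph, $X$ is a circuit.

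Finally I would show that $X$ is a hyperplane, that is, a rank-$(r-1)$ flat of $\kin{r}$, by proving $r_{\kin{r}}(X\cup\{y\})=r$ for every $y\notin X$. Such a $y$ lies in some $H_j$ with $j\in\{1,\dots,r-1\}\setminus\{s\}$, and $y$ represents $\mcal{A}$ together with every $\mcal{A}_i$ for $i\notin\{j,j+1\}$. The scheme is to retain $e\mapsto\mcal{A}'$ and $f\mapsto\mcal{A}$, let $H_s$ cover the $r-3$ indices in $\{1,\dots,r-1\}\setminus\{s,s+1\}$, and use $y$ to cover one of the two indices $s,s+1$ that $H_s$ misses. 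The crucial point is that, since $j\ne s$, at least one of $s$ and $s+1$ avoids the pair $\{j,j+1\}$, and $y$ can be assigned to that $\mcal{A}_i$; this yields a partial transversal of size $r$, so $r_{M_{r+1}}(X\cup\{y\})=r$ and hence $r_{\kin{r}}(X\cup\{y\})=r$. This last step is the main obstacle: one must run the case analysis according to how the cyclic pair $\{j,j+1\}$ meets $\{s,s+1\}$ (disjoint, overlapping at $s$, or overlapping at $s+1$) to confirm that $y$ always covers the missing set, and the boundary behaviour of the modular arithmetic should be checked separately in the smallest case $r=4$, where $\{s-1,s,s+1\}$ exhausts all indices. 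Having shown $X$ has rank $r-1$ while every element outside $X$ raises the rank to $r$, we conclude that $X$ is a rank-$(r-1)$ flat, i.e. a circuit-hyperplane of $\kin{r}$.
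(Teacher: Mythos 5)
Your proof is correct and follows essentially the same route as the paper: both arguments analyse the bipartite graph of the transversal presentation, showing that $H_s\cup H_r$ has a neighbourhood of $r-1$ presentation sets (forcing rank $r-1$) and that any further element reaches $\mathcal{A}_s$ or $\mathcal{A}_{s+1}$ (forcing the rank up to $r$), then pass to the truncation. You are somewhat more explicit than the paper, which leaves the circuit verification as "very easy to confirm" and deduces the hyperplane property by noting that a rank-$(r-1)$ flat of $M_{r+1}$ survives truncation rather than recomputing ranks directly in $\kin{r}$, but these are presentational differences only; your key observation that $j\ne s$ forces one of $s,s+1$ to lie outside $\{j,j+1\}$ (valid since $r-1\geq 3$) is exactly the point the paper's flat argument relies on.
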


\begin{proof}
In $G_{r+1}$, the $r-2$ vertices in $H_{s}$ are each
adjacent to the $r-2$ vertices
\[
\{\mcal{A}_{1},\ldots, \mcal{A}_{r-1},\mcal{A}\}-
\{\mcal{A}_{s},\mcal{A}_{s+1}\},
\]
while the two vertices in $H_{r}$ are adjacent only to
$\mcal{A}$ and $\mcal{A}'$.
Thus $H_{s}\cup H_{r}$ contains $r$ vertices and
has a neighbourhood set of $r-1$ vertices.
Therefore $H_{s}\cup H_{r}$ is dependent, and in fact
it is very easy to confirm that it is a circuit of $M_{r+1}$.
Since it has cardinality $r$,
it is also a circuit in $T(M_{r+1})=\kin{r}$.

Let $x$ be an element in $E(M_{r+1})-(H_{s}\cup H_{r})$.
Then $x$ is adjacent to either $\mcal{A}_{s}$ or $\mcal{A}_{s+1}$
in $G_{r+1}$.
Thus the vertices in $H_{s}\cup H_{r}\cup x$ are adjacent to
$r$ vertices, so
\[r_{M_{r+1}}(H_{s}\cup H_{r}\cup x)>r_{M_{r+1}}(H_{s}\cup H_{r}).\]
This shows that  $H_{s}\cup H_{r}$ is a flat in $M_{r+1}$.
As $r_{M_{r+1}}(H_{s}\cup H_{r})=r-1=r(M_{r+1})-2$,
it follows that $H_{s}\cup H_{r}$ is also a flat in
$T(M_{r+1})=\kin{r}$, and is therefore a hyperplane of this matroid.
Thus $H_{s}\cup H_{r}$ is a circuit-hyperplane of
$\kin{r}$.
\end{proof}

\begin{proposition}
\label{Kin-}
Let $r\geq 4$ be an integer, and let $s$ be in $\{1,\ldots, r-1\}$.
The matroid obtained from \kin{r} by relaxing the
circuit-hyperplane $H_{s}\cup H_{r}$
is not representable over any field.
\end{proposition}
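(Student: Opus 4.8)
The plan is to assume for contradiction that the matroid $M'$ obtained from $\kin{r}$ by relaxing the circuit-hyperplane $C = H_s\cup H_r$ is representable over some field, and to derive a violation of Kinser's inequality (Lemma~\ref{kinser}). First I would record the standard fact that relaxing a circuit-hyperplane alters the rank of exactly one set. Since $C$ is a circuit-hyperplane, $r_{\kin{r}}(C) = r-1$, whereas $r_{M'}(C) = r$; for every other subset $X$ we have $r_{M'}(X) = r_{\kin{r}}(X)$, because the only new independent set created by the relaxation is $C$ itself, and any set properly containing $C$ already spans $\kin{r}$. Thus $M'$ and $\kin{r}$ have identical rank functions except that the rank of $C$ is larger by exactly one in $M'$.

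Next I would apply Lemma~\ref{kinser} to $M'$ with $n = r$, taking the sets $X_1,\dots,X_r$ to be the $r$ parts of the ground set in a carefully chosen order. Using the cyclic automorphism of $\kin{r}$ that rotates $H_1,\dots,H_{r-1}$ and fixes $H_r$, I may assume $s = 1$. I then set $X_1 = H_1$, $X_2 = H_r$, and $X_{i} = H_{i-1}$ for $3 \le i \le r$, so that the parts other than $H_1$ and $H_r$ occur in their cyclic order. With this labelling the term $X_1\cup X_2$ equals $C$, and a short inspection shows that $C$ occurs as a union-term nowhere else: every other left-hand term is a single part or a union of three parts, and every right-hand term is either a pair different from $\{H_1,H_r\}$ or a union of three parts. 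Hence, among all ranks appearing in the inequality, passing from $\kin{r}$ to $M'$ changes only the single left-hand term $r(X_1\cup X_2) = r(C)$, raising it by one.

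The crux is to show that Kinser's inequality holds with \emph{equality} in $\kin{r}$ for this choice of sets; granting this, the left-hand side of the inequality for $M'$ exceeds the right-hand side by one, contradicting Lemma~\ref{kinser}. To verify the equality I would evaluate every rank using the description $\kin{r} = T(M_{r+1})$, so that each rank is $\min\{r,\mu\}$, where $\mu$ is the size of a maximum matching in the defining bipartite graph; these matchings are transparent because all elements of a part $H_j$ share one neighbourhood. The outcome is: a single part $H_j$ with $j\le r-1$ has rank $r-2$; the special set $C$ has rank $r-1$; the three-fold unions on the left all span, hence have rank $r$; and every pair on the right is either a cyclically consecutive pair $H_a\cup H_{a+1}$ or a pair $H_j\cup H_r$, each of rank $r-1$. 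Substituting these values, both sides collapse to $(r-1)(2r-3)$, so equality holds. (As a sanity check, when $r=4$ the inequality specialises to Ingleton's inequality and both sides equal $15$.)

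Finally, since $r_{M'}(C) = r_{\kin{r}}(C) + 1$ and no other rank changes, the left-hand side of Kinser's inequality for $M'$ is $(r-1)(2r-3)+1$ while the right-hand side remains $(r-1)(2r-3)$; this violates Lemma~\ref{kinser}, so $M'$ is not representable over any field. I expect the main obstacle to be the bookkeeping in the equality step — in particular, confirming that the chosen cyclic ordering forces every right-hand pair to be consecutive or to involve $H_r$ (so each has rank $r-1$ rather than the rank $r$ of a non-consecutive pair), and checking the boundary terms $X_1\cup X_n = H_1\cup H_{r-1}$ and $X_1\cup X_3\cup X_n$, where the cyclic indexing of the $\mcal{A}_i$ meets the endpoints of the Kinser chain.
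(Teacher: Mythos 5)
Your proposal is correct and follows essentially the same route as the paper: reduce to $s=1$ by the cyclic symmetry, apply Lemma~\ref{kinser} with $n=r$ and $(X_1,X_2,X_3,\ldots,X_r)=(H_1,H_r,H_2,\ldots,H_{r-1})$, and compute all the ranks from the transversal description. The only cosmetic difference is that you phrase the computation as ``equality holds in $\kin{r}$ and relaxation raises the left side by one,'' while the paper evaluates both sides directly in the relaxed matroid, obtaining $2r^2-5r+4 > 2r^2-5r+3$ — the same numbers as your $(r-1)(2r-3)+1 > (r-1)(2r-3)$.
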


\begin{proof}
By relabeling $\mcal{A}_{i}$ and $H_{i}$ as
$\mcal{A}_{i-s+1}$ and $H_{i-s+1}$ (modulo $r-1$) for each
$i\in \{1,\ldots, r-1\}$,
we can assume that $s=1$.
Let $M$ be the matroid obtained from \kin{r} by relaxing
$H_{1}\cup H_{r}$.
We prove that $M$ is non-representable by setting $n=r$ and setting
\[
(X_{1},X_{2},X_{3},\ldots, X_{n})=(H_{1},H_{r},H_{2},\ldots, H_{r-1}),
\]
and then applying Lemma~\ref{kinser} to $M$.
Since $X_{1}\cup X_{2}$ is a relaxed circuit-hyperplane in
$M$, it follows that
$r(X_{1}\cup X_{2})=r$.
Note that $H_{i}\cup H_{r}$ is a circuit-hyperplane of
$M$ for any $i\in \{2,\ldots, r-1\}$.
Thus, any set $X_{i}$, where $i\in \{3,\ldots, n\}$,
is an $(r-2)$\dash element subset of a circuit-hyperplane.
This means that $r(X_{i})=r-2$.
In particular, $r(X_{3})=r(H_{2})=r-2$.
In the bipartite graph $G_{r+1}$, the vertices in $H_{2}$ are
adjacent to the $r-2$ vertices
$\mcal{A}_{1},\mcal{A}_{4},\ldots,\mcal{A}_{r-1},\mcal{A}$.
Each vertex in $H_{1}$ is adjacent to $\mcal{A}_{3}$, while
every vertex in $H_{r-1}$ is adjacent to $\mcal{A}_{2}$.
These considerations imply that $H_{1}\cup H_{2}\cup H_{r-1}$ has
rank at least $r$ in $M_{r+1}$, and hence in $M$.
Thus $r(X_{1}\cup X_{3}\cup X_{n})=r$.
For $i\in\{4,\ldots, n\}$, the set
$X_{2}\cup X_{i-1}=H_{r}\cup H_{i-2}$ is a circuit-hyperplane
of $M$.
It follows that
$r(X_{2}\cup X_{i-1}\cup X_{i})=r$.
Now the left-hand side of the inequality in Lemma~\ref{kinser}
evaluates to
\[
r+r+(r-2)+(r-3)[(r-2)+r]
=2r^{2}-5r+4.
\]

On the other hand, if $i\in\{1,\ldots, r-1\}$, then the
neighbourhood in $G$ of $H_{i}\cup H_{i+1}$ contains
the $r-1$ vertices
$\{\mcal{A}_{1},\ldots, \mcal{A}_{r-1},\mcal{A}\}-\mcal{A}_{i+1}$.
Thus $H_{i}\cup H_{i+1}$ has rank at most $r-1$ in $M_{r+1}$.
In fact it has rank exactly $r-1$, as $H_{i}$ as rank $r-2$,
and any vertex in $H_{i+1}$ is adjacent to $\mcal{A}_{i}$,
while no vertex in $H_{i}$ is.
Thus $H_{i}\cup H_{i+1}$ has rank $r-1$ in $M$.
This shows that
$r(X_{1}\cup X_{3})$,
$r(X_{1}\cup X_{n})$, and
$r(X_{i-1}\cup X_{i})$ for $i\in \{4,\ldots, n\}$ are
all equal to $r-1$.
Furthermore, $X_{2}\cup X_{i}$ is a circuit-hyperplane
for all $i\in \{3,\ldots, n\}$, so has rank $r-1$.
Now every term in the right-hand side of the inequality in
Lemma~\ref{kinser} is equal to $r-1$, so this side
evaluates to $(2(r-3)+3)(r-1)=2r^{2}-5r+3$.
Thus the inequality in Lemma~\ref{kinser} does not hold, so
$M$ is not representable over any field.
\end{proof}

If $r\geq 4$ is an integer, then we define $\kin{r}^{-}$
to be the matroid obtained from \kin{r} by relaxing
the circuit-hyperplane $H_{1}\cup H_{r}$.
The previous result shows that $\kin{r}^{-}$ is
non-representable.
Since \kin{4} is isomorphic to the 
rank\dash $4$ tipless free spike, it is easy to see that
$\kin{4}^{-}$ is the \emph{V\'{a}mos matroid}
(see \cite[page~84]{Oxl11} or \cite{Vam68}).
In fact, we can think of $\kin{n}^{-}$ as exemplifying
matroids that fail the inequality in Lemma~\ref{kinser},
in exactly the same way that the V\'{a}mos matroid exemplifies
matroids that fail the Ingleton inequality \cite{Ing71}.

Relaxing a single circuit-hyperplane in \kin{r}
produces a non-representable matroid.
We show in the next result that by relaxing two,
we can recover representability over any infinite field.

\begin{lemma}
\label{doublerelax}
Let \mbb{K} be an infinite field,
let $r\geq 4$ be an integer, and let $s$ and $t$ be distinct
members of $\{1,\ldots, r-1\}$.
The matroid that is obtained from \kin{r} by relaxing
the circuit-hyperplanes $H_{s}\cup H_{r}$ and
$H_{t}\cup H_{r}$ is \mbb{K}\dash representable.
\end{lemma}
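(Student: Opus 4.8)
The plan is to start from a $\mbb{K}$\dash representation of $\kin{r}$ (which exists by Proposition~\ref{KinKrep}), realised as an embedding of the ground set in $\pg{r-1,\mbb{K}}$, and to obtain the double relaxation by moving the \emph{single} point $f$ to a new position $f^{*}$, while leaving every other point fixed (in particular $e$ and all of $H_{1},\ldots,H_{r-1}$). Since $H_{s}\cup H_{r}=H_{s}\cup\{e,f\}$ and $H_{t}\cup H_{r}=H_{t}\cup\{e,f\}$ both contain $f$, a single move can disturb both circuit-hyperplanes simultaneously, and the task reduces to choosing $f^{*}$ so that these two sets become independent while the rank of every other subset is preserved.

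First I would record the structural fact that in $\kin{r}$ every circuit through $e$ contains $f$, and vice versa: in the transversal presentation the block $\mcal{A}'$ is adjacent only to $e$ and $f$, so neither can be forced into a circuit without the other, and one checks this property survives truncation. Consequently every dependency involving $f$ already involves $e$, so the flats relevant to moving $f$ are exactly those of the form $\langle H_{i}\cup\{e\}\rangle$. For each $i$ this is a hyperplane of rank $r-1$ containing the line $\langle e,f\rangle$, and $H_{i}\cup\{e,f\}$ is a circuit-hyperplane precisely because $f$ lies on it.

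Next I would translate ``the new matroid is the double relaxation'' into conditions on $f^{*}$. Running through the subsets $W$ that do not contain $f$, one finds that the rank of $W\cup\{f\}$ is preserved exactly when $f^{*}\in\langle W\rangle$ if and only if $f\in\langle W\rangle$, and that the only two subsets whose behaviour should change are $W=H_{s}\cup\{e\}$ and $W=H_{t}\cup\{e\}$. This gives the requirements: (a) $f^{*}$ lies in $D:=\bigcap_{i\neq s,t}\langle H_{i}\cup\{e\}\rangle$; (b) $f^{*}\notin\langle H_{s}\cup\{e\}\rangle$ and $f^{*}\notin\langle H_{t}\cup\{e\}\rangle$, so that $H_{s}\cup\{e,f^{*}\}$ and $H_{t}\cup\{e,f^{*}\}$ acquire rank $r$ and become bases; and (c) $f^{*}$ avoids a finite list of further proper subflats of $D$, namely every $\langle W\rangle$ with $f\notin\langle W\rangle$ together with the flats $\langle H_{i}\rangle$ and the flats spanned by deleting one element from each circuit-hyperplane $H_{i}\cup\{e,f\}$, so that no new dependency appears and each surviving $H_{i}\cup\{e,f^{*}\}$ remains a genuine circuit-hyperplane. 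The point of separating (c) is that, since $f\in D$ while $f\notin\langle W\rangle$ for each avoidance flat $\langle W\rangle$, each of these flats meets $D$ in a \emph{proper} subflat.

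The final step is existence of $f^{*}$. All the conditions in (b) and (c) ask $f^{*}$ to lie in $D$ but outside finitely many proper subflats of $D$; since $\mbb{K}$ is infinite, Proposition~\ref{infinity} applied inside $D$ (viewed as a projective space) produces such a point. The crux, and the step I expect to be the main obstacle, is verifying that (b) does cut out \emph{proper} subflats, i.e.\ that $D\not\subseteq\langle H_{s}\cup\{e\}\rangle$ and $D\not\subseteq\langle H_{t}\cup\{e\}\rangle$. Writing $\nu_{i}$ for a linear functional cutting out $\langle H_{i}\cup\{e\}\rangle$, all the $\nu_{i}$ vanish on the line $\langle e,f\rangle$ and hence lie in a common $(r-2)$\dash dimensional space, and the required fact is the general-position statement $\nu_{s}\notin\operatorname{span}(\nu_{i}:i\neq s,t)$, equivalently that $D$ has rank $3$ and properly contains $\langle e,f\rangle$. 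I would prove this by a direct rank computation in the transversal matroid $M_{r+1}$, in the same neighbourhood-counting style as Proposition~\ref{Kin-}. This general-position property is exactly what distinguishes the double relaxation from the single one and makes it $\mbb{K}$\dash representable.
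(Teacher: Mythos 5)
Your overall strategy---perturb a representation so that exactly the two circuit-hyperplanes through $f$ containing $H_s$ and $H_t$ become independent, using Proposition~\ref{infinity} to find the new point---is in the right spirit, and your reduction of the problem to conditions (a), (b), (c) on $f^{*}$ is essentially correct. You have also correctly located the crux. The problem is that the crux cannot be resolved by the method you propose. The statement $D\not\subseteq\langle H_{s}\cup\{e\}\rangle$ (equivalently, $\nu_{s}\notin\operatorname{span}(\nu_{i}:i\neq s,t)$) is a property of the particular representation, not of the matroid: it concerns the dimension of an \emph{intersection} of flats of the projective space, and such intersections are not determined by the ranks of subsets of the ground set. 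Indeed, the only ground-set elements lying in $D=\bigcap_{i\neq s,t}\langle H_{i}\cup\{e\}\rangle$ are $e$ and $f$ (since each $H_{i}\cup\{e,f\}$ is its own closure), so the matroid sees nothing of $D$ beyond the line $\langle e,f\rangle$, while $D$ has projective rank at least $3$. A ``direct rank computation in $M_{r+1}$'' in the style of Proposition~\ref{Kin-} therefore cannot decide whether the extra directions of $D$ lie inside $\langle H_{s}\cup\{e\}\rangle$. Concretely, for $r\geq 5$ the $r-1$ functionals $\nu_{1},\ldots,\nu_{r-1}$ all live in the rank-$(r-2)$ annihilator of $\langle e,f\rangle$, so they are necessarily dependent; your argument needs every $r-2$ of them to be independent, and nothing in the matroid data obviously forbids a representation in which, say, $\nu_{1}\in\langle\nu_{3},\nu_{4}\rangle$. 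You would have to either prove the general-position property for \emph{every} representation (which may be false), or exhibit a specific representation for which it holds---and the representation supplied by Proposition~\ref{KinKrep} is itself obtained non-constructively, so its flat-intersection pattern is not available to you.

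This is exactly the difficulty the paper's proof is engineered to avoid. Rather than fixing a representation of $\kin{r}$ and moving $f$, the paper deletes both $e$ and $f$ and builds a fresh transversal matroid $M'$ with two auxiliary points $p,p'$, distributed among the transversal blocks so that, by neighbourhood counting alone, $p'\in\cl(H_{i})$ for $i<s$, $p\in\cl(H_{i})$ for $s<i<r-1$, and neither lies in $\cl(H_{s})$ or $\cl(H_{r-1})$. The line $l=\langle p,p'\rangle$ therefore meets $\langle H_{i}\rangle$ for every surviving index and misses it for the two relaxed ones \emph{by construction}, and $e$ and $f$ are then placed freely on $l$ via Proposition~\ref{infinity}. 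In other words, the paper manufactures the general-position data inside the transversal presentation, where it is certifiable by rank computations on ground-set elements ($p$ and $p'$ are ground-set elements of $M'$), instead of trying to extract it from an unspecified representation. To repair your proof you would need an analogous device---for instance, first representing $\kin{r}\setminus f$ so that the flats $\langle H_{i}\cup\{e\}\rangle$ are certifiably in general position---at which point you have essentially reconstructed the paper's argument.
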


\begin{proof}
We assume that $s<t$.
By relabeling $\mcal{A}_{i}$ and $H_{i}$ as
$\mcal{A}_{i-t+r-1}$ and $H_{i-t+r-1}$ for
every $i\in\{1,\ldots, r-1\}$, we can assume that
$t=r-1$.
Relabel $s-t+r-1$ as $s$.
Let $M$ be the matroid obtained from \kin{r} by relaxing
$H_{s}\cup H_{r}$ and $H_{r-1}\cup H_{r}$.
We aim to show that $M$ is \mbb{K}\dash representable.

We start by constructing a rank\dash $r$ transversal matroid, $M'$,
on the ground set
\[E(M_{r+1}\ba \{e,f\})\cup\{p,p'\},\]
where $p$ and $p'$ are distinct elements, 
neither of which is in $E(M_{r+1})$.
Let $\mcal{A}_{0}$ be
$E(M_{r+1}\ba \{e,f\})\cup \{p,p'\}$.
For $i\in \{1,\ldots, s\}$, let $\mcal{A}_{i}'$ be
$\mcal{A}_{i}\cup p$.
For $i\in \{s+1,\ldots,r-1\}$, let
$\mcal{A}_{i}'$ be $\mcal{A}_{i}\cup p'$.
Let $M'$ be the transversal matroid
$M[\mcal{A}_{1}',\ldots, \mcal{A}_{r-1}',\mcal{A}_{0}]$.

It is clear that
$M'\ba \{p,p'\}=M_{r+1}\ba \{e,f\}$.
Moreover, it is straightforward to verify that $\{e,f\}$ is a series pair
in $M_{r+1}$, and from this it follows easily that
\[M_{r+1}\ba \{e,f\}=
T(M_{r+1})\ba \{e,f\}=\kin{r}\ba \{e,f\}.
\]
Thus $M'\ba \{p,p'\}=\kin{r}\ba\{e,f\}$.

Since $M'$ is transversal, it is \mbb{K}\dash representable.
We consider it as a subset of points in the projective space
$P=\pg{r-1,\mbb{K}}$.
Let $l$ be the line of $P$ that is spanned by
$p$ and $p'$.

\begin{claim}
Let $X$ be a subset of
$E(M_{r+1}\ba \{e,f\})$ that is
non-spanning in $M'$.
Then $l\nsubseteq \cl_{P}(X)$.
\end{claim}

\begin{proof}
Assume otherwise.
Then there is a subset of $E(M_{r+1}\ba\{e,f\})$ that
spans $l$ and is independent and non-spanning in $M'$.
Let $X$ be such a subset.
Thus $p,p'\in\cl_{M'}(X)$.
Let $C\subseteq X\cup p$ be a circuit of $M'$ that contains $p$.
Let $c$ be an element in $C-p$.
Then in $G'$, the bipartite graph corresponding to the
system $(\mcal{A}_{1}',\ldots, \mcal{A}_{r-1}',\mcal{A}_{0})$,
the vertex $c$ has $r-2$ neighbours.
Since the neighbourhood set of $C$ is one element smaller
than $C$, this means that $|C|\geq r-1$.
Let $C'\subseteq X\cup p'$ be a circuit that contains $p'$.
The same argument shows that $|C'|\geq r-1$.
Since
\[r>|X|\geq|(C-p)\cup (C'-p')|\geq (2r-4)-|(C-p)\cap(C'-p')|\]
and $r\geq 4$, this means that there is an element,
$x$, in $(C-p)\cap (C'-p')$.
Assume that $x$ is in one of $H_{1},\ldots, H_{s-1}$.
As $p$ is adjacent to 
the vertices $\mcal{A}_{0},\mcal{A}_{1}',\ldots, \mcal{A}_{s}'$,
and $x$ is adjacent to all vertices, other than two
in $\mcal{A}_{1}',\ldots, \mcal{A}_{s}'$,
it follows that
the neighbourhood set of $C$ contains $r$ vertices.
This means that $|C|\geq r+1$, which is impossible as
$X$ is non-spanning.
Similarly, if $x$ is in one of $H_{s+1},\ldots, H_{r-2}$,
then, as $p'$ is adjacent to
$\mcal{A}_{s+1}',\ldots, \mcal{A}_{r-1}'$, and 
$x$ is adjacent to every vertex other than two in
$\mcal{A}_{s+1}',\ldots, \mcal{A}_{r-1}'$, we deduce that
$|C'|\geq r+1$.
This contradiction means that
$(C-p)\cap(C'-p)$ is contained in $H_{s}\cup H_{r-1}$.
If $(C-p)\cap(C'-p')$ contains elements from both $H_{s}$ and
$H_{r-1}$, then the neighbourhood set of either $C$ or $C'$
contains all $r$ vertices, and this leads to the same contradiction
as before.
Thus $(C-p)\cap (C'-p')$ is contained in either
$H_{s}$ or $H_{r-1}$.
Thus the neighbourhood set of $C$ includes every vertex
other than either $\mcal{A}_{s+1}'$ or $\mcal{A}_{r-1}'$,
meaning that $|C|\geq r$, and hence $|C|=r$.
Similarly, the neighbourhood set of $C'$ contains every vertex
other than either $\mcal{A}_{1}'$ or $\mcal{A}_{s}'$, so
$|C'|=r$.
As $r>|X|$ and $X\supseteq (C-p)\cup (C'-p')$, we deduce that
$C-p=C'-p'$.
Our earlier arguments show that $C$ is contained in either
$H_{s}\cup p$ or $H_{r-1}\cup p$.
But this means that $|C|\leq r-1$, and the neighbourhood
set of $C$ contains all of the $r$ vertices other than
either $\mcal{A}_{s+1}'$ or $\mcal{A}_{r-1}'$.
This contradicts the fact that $C$ is a circuit, and
completes the proof of the claim.
\end{proof}

Consider all the subspaces of $P$ that are spanned by
subsets of $E(M_{r+1}\ba \{e,f\})$ that are non-spanning in $M'$.
This is a finite collection of subspaces, and the previous
claim says that none of them contains $l$.
By Proposition~\ref{infinity}, there is a point, $f$, on
$l$ that is not spanned by any non-spanning subset of
$E(M_{r+1}\ba\{e,f\})$.
We can apply the same argument, augmenting the collection
of subspaces with $\langle \{f\}\rangle$, and find another,
distinct, point, $e$, on $l$ that is not spanned by any
non-spanning subset of $E(M_{r+1}\ba\{e,f\})$.
Consider the \mbb{K}\dash representable matroid corresponding to the
subset $H_{1}\cup\cdots\cup H_{r-1}\cup\{e,f\}$ of $P$.
Let this matroid be $N$.
We will show that $N=M$, and this
will complete the proof of Lemma~\ref{doublerelax}.

Certainly $N\ba\{e,f\}=M'\ba\{p,p'\}$, and we deduced earlier
that $M'\ba \{p,p'\}=\kin{r}\ba \{e,f\}$.
As $e$ and $f$ are contained in the
circuit-hyperplanes $H_{s}\cup H_{r}$ and
$H_{r-1}\cup H_{r}$, deleting them from $M$ effectively
undoes the relaxations that produced $M$
(see \cite[Proposition~3.3.5]{Oxl11});
that is, $\kin{r}\ba \{e,f\}=M\ba \{e,f\}$.
Now we have shown that $N\ba \{e,f\}=M\ba \{e,f\}$.
Moreover, in $N\ba e$, the element $f$ is freely
placed by construction, so $N\ba e$ is a free extension of
$N\ba \{e,f\}$.
On the other hand, as $e$ is in $H_{s}\cup H_{r}$ and
$H_{r-1}\cup H_{r}$, it follows that
\[M\ba e=\kin{r}\ba e=T(M_{r+1})\ba e=T(M_{r+1}\ba e).\]
But $f$ is a coloop in $M_{r+1}\ba e$, so it is freely placed
in $T(M_{r+1}\ba e)=M\ba e$.
Therefore $M\ba e$ is a free extension of $M\ba \{e,f\}$.
As $N\ba \{e,f\}=M\ba \{e,f\}$, this means that $N\ba e=M\ba e$.

Assume that $N\ne M$.
Then there is a set, $X$, which is a non-spanning circuit in
one of $\{M,N\}$, and independent in the other.
As $N\ba e=M\ba e$, it follows that $e$ is in $X$.
We will show that $f$ is also in $X$.
If $X$ is a non-spanning circuit of $N$, then $f\in X$,
for otherwise $X-e$ is a non-spanning subset of
$E(M_{r+1}\ba \{e,f\})$ that spans $e$, and $N$ was constructed
so that no such subset exists.
Therefore assume that $X$ is a non-spanning circuit in $M$.
Then $X$ is also a non-spanning circuit in $\kin{r}=T(M_{r+1})$,
and hence in $M_{r+1}$.
But $\{e,f\}$ is a series pair in $M_{r+1}$, so any circuit
that contains $e$ also contains $f$.
Thus $X$ contains $\{e,f\}$ in either case.

First we assume that $X$ is independent in $N$ and 
a non-spanning circuit of $M$, and hence of \kin{r} and
$M_{r+1}$.
Since $|X|\leq r$, the neighbourhood set of $X$ in $G_{r+1}$,
the bipartite graph corresponding to
$(\mcal{A}_{1},\ldots, \mcal{A}_{r-1},\mcal{A},\mcal{A}')$,
has at most $r-1$ vertices.
If $X-\{e,f\}$ contains elements from two distinct sets
in $\{H_{1},\ldots, H_{r-1}\}$, then the neighbourhood set
of these two elements contains all but at most one vertex
from $\{\mcal{A}_{1},\ldots, \mcal{A}_{r-1}\}$.
As $e$ and $f$ are adjacent to \mcal{A} and $\mcal{A}'$,
this means that $X$ has a neighbourhood set containing
$r$ vertices.
It follows that $X-\{e,f\}$ is contained in one of
$H_{1},\ldots, H_{r-1}$.
Thus the neighbourhood set of $X$ contains
exactly $r-1$ vertices.
Thus $X$ has cardinality $r$, so $X=H_{i}\cup\{e,f\}$, for
some $i\in \{1,\ldots, r-1\}$.
However, $i$ is not $s$ or $r-1$, as
$H_{s}\cup\{e,f\}$ and $H_{r-1}\cup\{e,f\}$ are bases in
$M$.
If $i\in \{1,\ldots, s-1\}$, then in the bipartite graph $G'$,
the $r-1$ vertices in $H_{i}\cup p'$ are adjacent to the
$r-2$ vertices in
\[\{\mcal{A}_{1}',\ldots, \mcal{A}_{r-1}',\mcal{A}_{0}\}
-\{\mcal{A}_{i}',\mcal{A}_{i+1}'\}.
\]
Thus $H_{i}\cup p'$ is dependent in $M'$.
But $\{e,f,p'\}$ is dependent in $P|E(M')\cup\{e,f\}$.
It follows easily that $H_{i}\cup\{e,f\}=X$ is
dependent in $P|E(M')\cup\{e,f\}$, and hence in $N$.
Similarly, if $i\in\{s+1,\ldots, r-2\}$, then the neighbourhood
set of $H_{i}\cup p$ is
\[\{\mcal{A}_{1}',\ldots, \mcal{A}_{r-1}',\mcal{A}_{0}\}
-\{\mcal{A}_{i}',\mcal{A}_{i+1}'\},
\]
so $H_{i}\cup p$ and $\{e,f,p\}$ are dependent.
This leads to the contradiction that $X$ is dependent in $N$.
Hence we now assume that $X$ is a non-spanning circuit of $N$.

Note that $X$ and $\{e,p,p'\}$ are both circuits of
$P|E(N)\cup\{p,p'\}$.
We apply strong circuit-elimination, and deduce that there is
a circuit, $C$, contained in $(X-e)\cup\{p,p'\}$ that
contains $p$.
If $f\in C$, then we apply strong circuit-elimination to
$C$ and $\{f,p,p'\}$, and find a circuit that contains $p$
but not $f$.
Thus we lose no generality in assuming that
$C\subseteq (X-\{e,f\})\cup\{p,p'\}$ is a circuit of $M'$ that
contains $p$.
If $p'$ is in $C$, then the neighbourhood set of $C$
in $G'$ contains all $r$ vertices
$\mcal{A}_{1}',\ldots, \mcal{A}_{r-1}',\mcal{A}_{0}$.
Thus $|X|\geq |C|\geq r+1$, which is impossible.
Hence $p'\notin C$.
If $C$ contains an element from $H_{1}\cup\cdots\cup H_{s}$ or
$H_{r-1}$, then the neighbourhood set of $C$ in $G'$ contains all but
at most one vertex from
$\mcal{A}_{1}',\ldots, \mcal{A}_{r-1}',\mcal{A}_{0}$.
Thus $|C|\geq r$.
As $p'\notin X$ implies $|X|\geq |C|+1$, this leads to a contradiction.
Therefore $C-p$ is contained in
$H_{s+1}\cup\cdots\cup H_{r-2}$.
If $C$ contains elements from two of $H_{s+1},\ldots, H_{r-2}$,
then its neighbourhood set again contains at least $r-1$
elements, leading to a contradiction.
Therefore $C-p$ is contained in one of $H_{s+1},\ldots, H_{r-2}$,
so the neighbourhood set of $C$ contains $r-2$ elements.
It follows that $|C|=r-1$, so $C=H_{i}\cup p$ for
some $i\in\{s+1,\ldots, r-2\}$.
As $|X|\leq r$, this implies that $X=H_{i}\cup\{e,f\}$.
But then $X$ is a circuit-hyperplane in $M$, contradicting the
fact that it is independent in this matroid.

We conclude that $N=M$, so $M$ is \mbb{K}\dash representable,
as desired.
\end{proof}

Recall that $\kin{r}^{-}$ is the matroid obtained from
\kin{r} by relaxing $H_{1}\cup H_{r}$.
If $r\in\{2,\ldots, r-1\}$, then we will let $\kin{r}_{i}^{=}$
be the matroid obtained from $\kin{r}^{-}$ by relaxing
$H_{i}\cup H_{r}$.
Thus the results in this section show that \kin{r} and
$\kin{r}_{i}^{=}$ are representable over any infinite field,
and that $\kin{r}^{-}$ is representable over no field.

\section{Proof of the main theorem}

In this section we prove our main theorem.
Theorem~\ref{main2} is a restatement of Theorem~\ref{main}
that uses slightly different language.
If \mcal{F} is a set of fields, then define
$M(\mcal{F})$ to be
\[
\bigcup_{F\in \mcal{F}}
\{M\mid M\ \text{is an}\ F\text{-representable matroid}\}.
\]

\begin{theorem}
\label{main2}
Let \mcal{F} be a set of fields that contains at least
one infinite field.
There does not exist a finite set, \mcal{K}, of sentences in \PH\
with the following property: if
$\mcal{M}=(E^{\mcal{M}},r^{\mcal{M}})$ is a structure,
then $(E^{\mcal{M}},r^{\mcal{M}})$ is a
matroid in $M(\mcal{F})$ if and only if \mcal{M} satisfies
$\{{\bf R1},{\bf R2},{\bf R3}\}\cup\mcal{K}$.
\end{theorem}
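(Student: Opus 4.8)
The plan is to assume, for contradiction, that such a finite set $\mcal{K}$ exists, and to play off the Kinser family of Section~\ref{kinssect} against the bounded ``quantifier resources'' of a single sentence. Fix an infinite field $\mbb{K}\in\mcal{F}$. For each $r\geq 4$ the matroid $\kin{r}^{-}$ is a genuine matroid, so it satisfies ${\bf R1},{\bf R2},{\bf R3}$, yet it is representable over no field (Proposition~\ref{Kin-}) and hence lies outside $M(\mcal{F})$; it must therefore fail some sentence of $\mcal{K}$. As $\mcal{K}$ is finite, one sentence $S\in\mcal{K}$ fails for infinitely many $r$, and I fix this $S$ and restrict attention to those $r$. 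By the normal form of Section~\ref{Mlogic}, $S$ is equivalent to $QX_{1}\cdots QX_{m}\,Q'x_{1}\cdots Q'x_{n}\,P$, with $P$ quantifier-free, where the set-block quantifier $Q$ and the element-block quantifier $Q'$ are each uniform. Let $t$ be the number of distinct set-terms occurring inside a rank-subterm $r(\cdot)$ of $P$; the numbers $m,n,t$ depend on $S$ only, not on $r$. By contrast, $\kin{r}$ and each $\kin{r}_{i}^{=}$ are $\mbb{K}$-representable (Proposition~\ref{KinKrep} and Lemma~\ref{doublerelax}), hence lie in $M(\mcal{F})$ and satisfy all of $\mcal{K}$, in particular $S$.

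The engine of the argument is the following elementary observation. If two structures on a common ground set have rank functions agreeing on every subset except one set $W$, then under a fixed interpretation $(\vec{A},\vec{a})$ a quantifier-free $P$ can take different truth values on the two structures only if some set-term $T$ inside a rank-subterm satisfies $T(\vec{A},\vec{a})=W$. Indeed, all atomic subformulas other than arithmetic ones depend only on the ground set, the interpretation, and cardinalities, which are shared; and the only matroid-dependent ingredients of an arithmetic atom $p\leq q$ or $p=q$ are the values $r(T)$, which agree on the two structures unless some such $T$ evaluates to $W$. In every application $W$ will be one of the $r-1$ circuit-hyperplanes $H_{s}\cup H_{r}$ of $\kin{r}$, whose rank is exactly what the separating relaxation changes.

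I now split on the set-block quantifier $Q$, arranging in each case that a \emph{single} set-assignment $\vec{A}$ witnesses all $r-1$ relaxations. If $Q=\forall$, then $\kin{r}^{-}$ fails $S$ through one set-assignment $\vec{A}$, while $\kin{r}$ and every $\kin{r}_{i}^{=}$ satisfy the inner formula under this same $\vec{A}$; since $\kin{r}^{-}$ differs from $\kin{r}$ only at $H_{1}\cup H_{r}$ and from $\kin{r}_{i}^{=}$ only at $H_{i}\cup H_{r}$, the single $\vec{A}$ meets all of $H_{1}\cup H_{r},\ldots,H_{r-1}\cup H_{r}$. If $Q=\exists$, I instead use that relaxing $H_{s}\cup H_{r}$ in $\kin{r}$ yields a matroid isomorphic to $\kin{r}^{-}$ (via the relabelling in Proposition~\ref{Kin-}), hence non-representable and, satisfaction being isomorphism-invariant, also failing $S$; now $\kin{r}$ supplies a single existential set-witness $\vec{A}$, and for each $s$ this same $\vec{A}$ witnesses the flip between $\kin{r}$ and the relaxation at $H_{s}\cup H_{r}$. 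Either way I obtain one fixed $\vec{A}$ such that, for each $s\in\{1,\ldots,r-1\}$, the inner formula $Q'x_{1}\cdots Q'x_{n}\,P(\vec{A},-)$ takes opposite truth values across a pair of matroids differing only at $H_{s}\cup H_{r}$.

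Finally I analyse the element-block, according to whether $Q'=Q$ or $Q'\neq Q$. When $Q'=Q$ a \emph{single} element-assignment $\vec{a}$ witnesses every flip, so the observation forces, under the one interpretation $(\vec{A},\vec{a})$, some rank-subterm set-term to equal $H_{s}\cup H_{r}$ for all $s$; but each of the $t$ such terms has a single value under a fixed interpretation, so this is impossible once $r-1>t$. When $Q'\neq Q$ the element-witness $\vec{a}^{(s)}$ varies with $s$ while $\vec{A}$ stays fixed. Here I use that any Boolean combination $T$ of the fixed sets in $\vec{A}$ and the singletons $\{x_{1}\},\ldots,\{x_{n}\}$ has value differing from a set $T_{0}$ that depends only on $\vec{A}$ by a subset of $\{a^{(s)}_{1},\ldots,a^{(s)}_{n}\}$, since altering a singleton changes membership of that one element only; thus $(H_{s}\cup H_{r})\triangle T_{0}$ has size at most $n$. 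Pigeonholing the $r-1$ indices among the $t$ terms gives one term handling two indices $s\neq s'$, whence $|H_{s}\cup H_{s'}|=|(H_{s}\cup H_{r})\triangle(H_{s'}\cup H_{r})|\leq 2n$, that is $2(r-2)\leq 2n$. Choosing $r$ (among the infinitely many for which $S$ fails) larger than $\max\{n+2,\,2t+1\}$ contradicts both bounds and completes the proof. The delicate point is precisely the set-existential case: the naive comparison using $\kin{r}$, $\kin{r}^{-}$, and the $\kin{r}_{i}^{=}$ produces a \emph{different} set-witness for each $i$, and the argument only survives once that comparison is replaced by the isomorphic family of single relaxations at the $H_{s}\cup H_{r}$, which restores one fixed $\vec{A}$.
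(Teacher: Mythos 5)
Your proposal is correct and follows essentially the same strategy as the paper: play the Kinser matroids $\kin{r}$, $\kin{r}^{-}$, and $\kin{r}_{i}^{=}$ against a single failing sentence, split into the four quantifier patterns, and use a counting/pigeonhole argument to find a circuit-hyperplane $H_{s}\cup H_{r}$ that the fixed interpretation cannot ``see''. The only differences are bookkeeping: you count the rank-subterms of $P$ directly where the paper counts definable subsets ($2^{2^{L}}$ of them), and your $T_{0}$ symmetric-difference trick for the mixed-quantifier cases is exactly the paper's Proposition~\ref{symdif} together with the families $\mcal{H}_{k}$.
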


Before we prove this theorem, we discuss some preliminaries.
Assume that $\mcal{M}=(E^\mcal{M},r^{\mcal{M}})$ is a
structure.
For every function, $\phi$, into $\mcal{P}(E^{\mcal{M}})$, there is
an induced family of subsets of $E^{\mcal{M}}$ that we call
\emph{definable} subsets (relative to $\phi$).
Let us say that a \emph{minterm} is a subset of
$E^{\mcal{M}}$ that can be expressed in the form
\[\bigcap_{X\in\operatorname{Dom}(\phi)} f(X),\]
where $f(X)$ is either $\phi(X)$ or $E^{\mcal{M}}-\phi(X)$, and the
intersection ranges over the domain of $\phi$.
Note that distinct minterms are disjoint, and that every element
of $E^{\mcal{M}}$ is in a minterm.
We say that a subset of $E^{\mcal{M}}$ is \emph{definable} if
it is a union of minterms.
Note that if the domain of $\phi$ has size $m$, then there
are at most $2^{m}$ possible minterms, and hence
at most $2^{2^{m}}$ definable subsets.

Now assume that $\{X_{i}\}_{i\in I}$ and
$\{x_{j}\}_{j\in J}$ are sets of variables in
\mcal{S} and \mcal{E} respectively, and that
$\phi_{\mcal{S}}\colon\{X_{i}\}_{i\in I}\to \mcal{P}(E^{\mcal{M}})$ and
$\phi_{\mcal{E}}\colon \{x_{j}\}_{j\in J} \to E^{\mcal{M}}$
are assignments of set and element variables to subsets and
elements of $E^{\mcal{M}}$.
We say that a set is definable relative to
$(\phi_{\mcal{S}},\phi_{\mcal{E}})$ if it is definable
relative to the function that takes
$X_{i}$ to $\phi_{\mcal{S}}(X_{i})$ for every $i\in I$, and
$x_{j}$ to $\{\phi_{\mcal{E}}(x_{j})\}$ for every $j\in J$.

Let $P$ be a formula in \PH\ such that
$\var{P}=\fr{P}$.
Let $(\phi_{\mcal{S}},\phi_{\mcal{E}})$ be an interpretation of $P$.
Observe that any set $\phi_{\mcal{S}}(X_{i})$ is definable
relative to $(\phi_{\mcal{S}},\phi_{\mcal{E}})$,
since it is the union of all minterms in which
$f(X_{i})=\phi_{\mcal{S}}(X_{i})$.
Similarly, any set $\{\phi_{\mcal{E}}(x_{j})\}$ is definable.
Both $E^{\mcal{M}}$ and the empty set are definable (the
former as the union of all possible minterms, the latter as the
empty union).
Furthermore, if $X$ and $Y$ are definable sets, then
$E^{\mcal{M}}-X$, $X\cup Y$, and $X\cap Y$ are also definable.
It follows that, if $X\in \mcal{S}$ is a term that appears
in $P$, then $X^{\mcal{M}}$ is one of the
\[
2^{2^{|\var{P}|}}
\]
definable subsets of $E^{\mcal{M}}$.

\begin{proposition}
\label{symdif}
Let $P$ be a formula in \PH\ such that
$\var{P}=\fr{P}$.
Let $(\phi_{\mcal{S}},\phi_{\mcal{E}})$ be an interpretation
of $P$, and
let $T=\phi_{\mcal{E}}(\var{P}\cap\mcal{E})$ be the image of
$\phi_{\mcal{E}}$.
Every definable set relative to $(\phi_{\mcal{S}},\phi_{\mcal{E}})$
can be expressed in the form $(A-T)\cup B$, where $A$ is definable relative to
$\phi_{\mcal{S}}$, and $B$ is a subset of $T$.
\end{proposition}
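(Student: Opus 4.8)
The plan is to compute the minterms relative to the full interpretation $(\phi_{\mcal{S}},\phi_{\mcal{E}})$ explicitly, and to observe that they come in just two shapes. Recall that such a minterm is an intersection in which each set variable $X_{i}$ contributes either $\phi_{\mcal{S}}(X_{i})$ or its complement, and each element variable $x_{j}$ contributes either the singleton $\{\phi_{\mcal{E}}(x_{j})\}$ or its complement. Every definable set is a union of minterms, so it suffices to show that each nonempty minterm relative to $(\phi_{\mcal{S}},\phi_{\mcal{E}})$ is either a singleton $\{t\}$ with $t\in T$, or a set of the form $M\ba T$, where $M$ is a minterm relative to $\phi_{\mcal{S}}$ alone.

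The key step is to identify the minterm containing a fixed element of $E^{\mcal{M}}$. First suppose $t\in T$, say $t=\phi_{\mcal{E}}(x_{j})$. The minterm containing $t$ must take the factor $\{\phi_{\mcal{E}}(x_{j})\}$ rather than its complement, since $t$ lies in this singleton; but then every element of that minterm lies in $\{\phi_{\mcal{E}}(x_{j})\}=\{t\}$, so the minterm is exactly $\{t\}$. This collapse is the crux of the argument, and it is precisely where the hypothesis that element variables are assigned singletons is used. Next suppose $e\in E^{\mcal{M}}\ba T$. Then $e$ lies in the complement of every singleton $\{\phi_{\mcal{E}}(x_{j})\}$, so the minterm containing $e$ takes the complementary factor for each element variable, while its set-variable factors single out the unique minterm $M$ relative to $\phi_{\mcal{S}}$ that contains $e$. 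Hence this minterm equals $M\cap(E^{\mcal{M}}\ba T)=M\ba T$.

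It follows that the nonempty minterms relative to $(\phi_{\mcal{S}},\phi_{\mcal{E}})$ are exactly the singletons $\{t\}$ for $t\in T$, together with the sets $M\ba T$ as $M$ ranges over the minterms relative to $\phi_{\mcal{S}}$. A definable set relative to $(\phi_{\mcal{S}},\phi_{\mcal{E}})$ is a union of these pieces, so it equals $(A\ba T)\cup B$, where $A$ is the union of those $\phi_{\mcal{S}}$-minterms $M$ whose piece $M\ba T$ was included, and $B$ is the set of chosen singletons; here I use that $\ba T$ distributes over unions, so $\bigcup_{M}(M\ba T)=\bigl(\bigcup_{M}M\bigr)\ba T$. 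Since $A$ is a union of $\phi_{\mcal{S}}$-minterms it is definable relative to $\phi_{\mcal{S}}$, and $B\subseteq T$, giving the required form $(A-T)\cup B$.

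The argument is essentially bookkeeping once the minterm collapse is established; the only points needing a little care are allowing empty minterms, which contribute nothing to any union, and permitting several element variables to share the same image in $T$, which leaves the singleton description $\{t\}$ unchanged. Neither causes any difficulty, so I expect the sole genuine step to be the observation that a minterm meeting $T$ collapses to a single point.
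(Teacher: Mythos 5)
Your proof is correct and takes essentially the same route as the paper: both classify the minterms of the full interpretation as either singletons contained in $T$ or sets of the form $M-T$ with $M$ a minterm relative to $\phi_{\mcal{S}}$, and then take unions. The only cosmetic difference is that you organize the case analysis around which element a minterm contains, whereas the paper cases directly on which factors of the intersection are singletons versus complements.
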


\begin{proof}
Consider a minterm
\[
Z=\bigcap_{x_{j}\in\var{P}\cap\mcal{E}}f(x_{j}),
\]
relative to the function that takes every
variable $x_{j}$ to $\{\phi_{\mcal{E}}(x_{j})\}$.
If $f(x_{j_{1}})=\{\phi_{\mcal{E}}(x_{j_{1}})\}$ and
$f(x_{j_{2}})=\{\phi_{\mcal{E}}(x_{j_{2}})\}$, for variables
$x_{j_{1}}$ and $x_{j_{2}}$ such that
$\phi_{\mcal{E}}(x_{j_{1}})\ne\phi_{\mcal{E}}(x_{j_{2}})$,
then $Z=\emptyset$.
If all the variables $x_{j}$ satisfying
$f(x_{j})=\{\phi_{\mcal{E}}(x_{j})\}$, have the same
image under $\phi_{\mcal{E}}$, then $Z$ is either the
empty set, or a singleton subset of $T$.
Finally, if $f(x_{j})=E^{\mcal{M}}-\{\phi_{\mcal{E}}(x_{j})\}$
for every variable $x_{j}$, then $Z=E^{\mcal{M}}-T$.

Every minterm relative to
$(\phi_{\mcal{S}},\phi_{\mcal{E}})$ is the intersection
of a minterm relative to $\phi_{\mcal{S}}$ with a minterm
relative to the function $x_{j}\mapsto\{\phi_{\mcal{E}}(x_{j})\}$.
Thus every minterm relative to
$(\phi_{\mcal{S}},\phi_{\mcal{E}})$ is either
the empty set, a singleton subset of $T$,
or the intersection of $E^{\mcal{M}}-T$ with a
minterm relative to $\phi_{\mcal{S}}$.
Now it is clear that any union of such minterms is
the union of a subset of $T$, and the intersection
of $A$ with $E^{\mcal{M}}-T$, where
$A$ is a definable subset relative to $\phi_{\mcal{S}}$.
Thus the proposition holds.
\end{proof}

\begin{proof}[Proof of Theorem~{\rm\ref{main2}}.]
We assume for a contradiction that \mcal{K} is a finite set of
sentences in \PH\ having the property that
$\mcal{M}=(E^{\mcal{M}},r^{\mcal{M}})$ is
a matroid in $M(\mcal{F})$ if and only if \mcal{M} satisfies
$\{{\bf R1},{\bf R2},{\bf R3}\}\cup\mcal{K}$.

Let $L$ be an integer such that
$|\var{S}|\leq L$ for every sentence $S\in \mcal{K}$.
Let
\[N=2^{2^{L}}+3,\]
and let $E^{\mcal{M}}=E(\kin{N})$.
 
Since $\kin{N}^{-}$ is not representable,
by Proposition~\ref{Kin-}, there is a sentence
in \mcal{K} that is not satisfied when $r^{\mcal{M}}$ is the
rank function of $\kin{N}^{-}$.
Let $S$ be such a sentence.
We can assume $S$ is a formula with one of the following forms:
\begin{enumerate}[label=(\arabic*)]
\item
$\exists X_{i_{1}}\cdots \exists X_{i_{m}}
\exists x_{j_{1}}\cdots \exists x_{j_{n}}P$
\item
$\exists X_{i_{1}}\cdots \exists X_{i_{m}}
\forall x_{j_{1}}\cdots \forall x_{j_{n}}P$
\item
$\forall X_{i_{1}}\cdots \forall X_{i_{m}}
\forall x_{j_{1}}\cdots \forall x_{j_{n}}P$
\item
$\forall X_{i_{1}}\cdots \forall X_{i_{m}}
\exists x_{j_{1}}\cdots \exists x_{j_{n}}P$
\end{enumerate}
where $P$ is a formula such that
$\var{P}\cap \mcal{S}=\fr{P}\cap \mcal{S}=\{X_{i_{1}},\ldots,X_{i_{m}}\}$
and
$\var{P}\cap \mcal{E}=\fr{P}\cap \mcal{E}=\{x_{j_{1}},\ldots,x_{j_{n}}\}$.
Let $I$ and $J$ be the index sets $\{i_{1},\ldots, i_{m}\}$
and $\{j_{1},\ldots, j_{n}\}$.
Note that $m+n\leq L$.

\paragraph{\bf Case 1.}
We first assume that $S$ has the form
\[
\exists X_{i_{1}}\cdots\exists X_{i_{m}}
\exists x_{j_{1}}\cdots\exists x_{j_{n}}P.
\]
Since \kin{N} is representable over at least one field
in \mcal{F}
(Proposition~\ref{KinKrep}), there is an interpretation,
$(\phi_{\mcal{S}},\phi_{\mcal{E}})$,
such that $P(\phi_{\mcal{S}},\phi_{\mcal{E}})$ is satisfied when
$r^{\mcal{M}}$ is the rank function of \kin{N}.
Consider the definable subsets relative to
$(\phi_{\mcal{S}},\phi_{\mcal{E}})$.
There are at most $2^{2^{m+n}}\leq 2^{2^{L}}$ such subsets.
As
\[N-1=2^{2^{L}}+2\]
is greater than the number of definable
subsets, there is an index $s\in\{1,\ldots, N-1\}$ such that
$H_{s}\cup H_{N}$ is not definable.
Let $M$ be the matroid obtained from
\kin{N} by relaxing $H_{s}\cup H_{N}$.
The rank functions of $M$ and \kin{N} differ
only on the set $H_{s}\cup H_{N}$.
Since this set is not definable, we see that
if $X$ is any set term appearing in $P$,
then the rank of $X^{\mcal{M}}$ in $M$ is the same as its rank in
\kin{N}.
Thus $P(\phi_{\mcal{S}},\phi_{\mcal{E}})$ is satisfied when
$r^{\mcal{M}}$ is the rank function of $M$.
For $k\in\{1,\ldots, N-1\}$, let $p_{k}$ be an arbitrary
bijection from $H_{k}$ to $H_{k-s+1}$.
These bijections clearly induce an isomorphism from $M$ to
$\kin{N}^{-}$.
By composing this isomorphism with $\phi_{\mcal{S}}$ and
$\phi_{\mcal{E}}$, we obtain an interpretation that satisfies
$P$ when $r^{\mcal{M}}$ is the rank function of $\kin{N}^{-}$.
Thus $S$ is satisfied when $r^{\mcal{M}}$ is the rank function
of $\kin{N}^{-}$, contrary to our assumption.

\paragraph{\bf Case 2.}
Next we assume that $S$ has the form
\[
\exists X_{i_{1}}\cdots\exists X_{i_{m}}
\forall x_{j_{1}}\cdots\forall x_{j_{n}}P.
\]
As \kin{N} is representable over a field in \mcal{F}, there
is some function
\[\phi_{\mcal{S}}\colon \{X_{i}\}_{i\in I}\to
\mcal{P}(E^{\mcal{M}}),\]
such that for every possible function
\[\phi_{\mcal{E}}\colon \{x_{j}\}_{j\in J}\to E^{\mcal{M}},\]
$P(\phi_{\mcal{S}},\phi_{\mcal{E}})$ is satisfied
when $r^{\mcal{M}}$ is the rank function
of \kin{N}.

For every $k\in\{1,\ldots, N-1\}$, let
\[\mcal{H}_{k}=
\{(H_{k}\cup H_{N})\triangle Z\mid Z\subseteq E^{\mcal{M}},
\ |Z|\leq 2n\},\]
where $\triangle$ denotes symmetric difference.
If some subset of $E^{\mcal{M}}$ is contained in
$\mcal{H}_{k_{1}}$ and $\mcal{H}_{k_{2}}$, where
$k_{1}\ne k_{2}$, then
$(H_{k_{1}}\cup H_{N})\triangle Z_{1}=
(H_{k_{2}}\cup H_{k})\triangle Z_{2}$, for
some sets $Z_{1}$ and $Z_{2}$ satisfying $|Z_{1}|,|Z_{2}|\leq 2n$.
Thus
\[
\emptyset=
((H_{k_{1}}\cup H_{N})\triangle Z_{1})\triangle
((H_{k_{2}}\cup H_{N})\triangle Z_{2})
=(H_{k_{1}}\triangle H_{k_{2}})\triangle (Z_{1}\triangle Z_{2}).
\]
But $H_{k_{1}}\triangle H_{k_{2}}=H_{k_{1}}\cup H_{k_{2}}$,
and this set has cardinality $2N-4$.
Thus
\[
2^{2^{L}+1}+2=2N-4=|Z_{1}\triangle Z_{2}|\leq
|Z_{1}\cup Z_{2}|\leq
|Z_{1}|+|Z_{2}|\leq 4n,
\]
and this is impossible as $n\leq L$.
This shows that no subset of $E^{\mcal{M}}$ lies in two distinct
families in $\mcal{H}_{1},\ldots, \mcal{H}_{N-1}$.
The number of definable subsets relative to
$\phi_{\mcal{S}}$ is $2^{2^{m}}$, which is less than
$N-1$.
Let $s$ be an index in $\{1,\ldots, N-1\}$ such that no
member of $\mcal{H}_{s}$ is definable
relative to $\phi_{\mcal{S}}$.

Let $\phi_{\mcal{E}}\colon\{x_{j}\}_{j\in J}\to E^{\mcal{M}}$
be an arbitrary assignment of element variables.
Proposition~\ref{symdif} tells us that a definable
set relative to $(\phi_{\mcal{S}},\phi_{\mcal{E}})$
is obtained from a definable set relative
to $\phi_{\mcal{S}}$ by removing at most $n$ elements
and then adding at most $n$ elements.
That is, a definable set relative to
$(\phi_{\mcal{S}},\phi_{\mcal{E}})$ is the symmetric difference
of a definable set relative to $\phi_{\mcal{S}}$,
and a set of cardinality at most $2n$.
Thus $H_{s}\cup H_{N}$ is not definable in
$(\phi_{\mcal{S}},\phi_{\mcal{E}})$, for any choice of the
assignment $\phi_{\mcal{E}}$, or else some definable set
relative to $\phi_{\mcal{S}}$ would be in
$\mcal{H}_{s}$.

Let $M$ be the matroid obtained from \kin{N}
by relaxing the circuit-hyperplane $H_{s}\cup H_{N}$.
Then the rank functions of \kin{N} and $M$ differ
only in $H_{s}\cup H_{N}$.
Since $P(\phi_{\mcal{S}},\phi_{\mcal{E}})$ is satisfied
by \kin{N} for any choice of the function $\phi_{\mcal{E}}$,
it follows that $P(\phi_{\mcal{S}},\phi_{\mcal{E}})$ is also
satisfied by $M$ for
any assignment $\phi_{\mcal{E}}$.
Thus $S$ is satisfied when $r^{\mcal{M}}$ is the
rank function of $M$.
Clearly $M$ is isomorphic to $\kin{N}^{-}$, and it
is easy to show that $S$ is satisfied when $r^{\mcal{M}}$
is the rank function of $\kin{N}^{-}$,
contradicting our assumption.

\paragraph{\bf Case 3.}
Assume that $S$ has the form
\[
\forall X_{i_{1}}\cdots\forall X_{i_{m}}
\forall x_{j_{1}}\cdots\forall x_{j_{n}}P.
\]
Then there are functions, $\phi_{\mcal{S}}$ and $\phi_{\mcal{E}}$,
such that $P(\phi_{\mcal{S}},\phi_{\mcal{E}})$ is not satisfied
when $r^{\mcal{M}}$
is the rank function of $\kin{N}^{-}$.
Choose $s\in\{2,\ldots, N-1\}$ so that
$H_{s}\cup H_{N}$ is not definable relative to
$(\phi_{\mcal{S}},\phi_{\mcal{E}})$.
Then
$P(\phi_{\mcal{S}},\phi_{\mcal{E}})$ is also
not satisfied when $r^{\mcal{M}}$ is the
rank function of $\kin{N}_{s}^{=}$.
This means that $S$ is not satisfied by
$\kin{N}_{s}^{=}$, and this is a contradiction as
Lemma~\ref{doublerelax} implies that
$\kin{N}_{s}^{=}$ is representable over at least one
field in \mcal{F}.

\paragraph{\bf Case 4.}
In the final case, we assume that $S$ has the form
\[
\forall X_{i_{1}}\cdots\forall X_{i_{m}}
\exists x_{j_{1}}\cdots\exists x_{j_{n}}P.
\]
Let
\[\phi_{\mcal{S}}\colon \{X_{i}\}_{i\in I}\to
\mcal{P}(E^{\mcal{M}}),\]
be an assignment so that $P(\phi_{\mcal{S}},\phi_{\mcal{E}})$ is
not satisfied by $\kin{N}^{-}$ for every choice of assignment
\[\phi_{\mcal{E}}\colon \{x_{j}\}_{j\in J}\to E^{\mcal{M}}.\]
For $k\in \{2,\ldots, N-1\}$, we define
$\mcal{H}_{k}$ exactly as we did in Case~2.
Choose the index $s\in\{2,\ldots, N-1\}$ so that
no subset in $\mcal{H}_{s}$ is definable relative to
$\phi_{\mcal{S}}$.
Then $H_{s}\cup H_{N}$ is not definable relative to
$(\phi_{\mcal{S}},\phi_{\mcal{E}})$, for any choice of
assignment $\phi_{\mcal{E}}$.
This means that $P(\phi_{\mcal{S}},\phi_{\mcal{E}})$ is not
satisfied by $\kin{N}_{s}^{=}$,
for the assignment $\phi_{\mcal{S}}$ and any
choice of assignment $\phi_{\mcal{E}}$.
Thus $S$ is not satisfied when $r^{\mcal{M}}$ is the rank function of
$\kin{N}_{s}^{=}$, and as this matroid is in $M(\mcal{F})$,
we have reached a contradiction that completes the proof
of Theorem~\ref{main2}.
\end{proof}

\begin{remark}
We developed MSOL using the function $r$, which has an
intended interpretation as a rank function.
If we add a unary independence predicate, $I$, for
set terms, it is still not possible to finitely
axiomatize representability over
any infinite field, using sentences in \PH.
To see this, note that if there were such an axiomatization,
we could simply replace every occurrence of $I(X)$ with the
predicate $r(X)=|X|$.
Then we would have a contradiction to Theorem~\ref{main}.
The same comment applies when we add a predicate for bases
or spanning sets.
\end{remark}

\begin{remark}
The authors of \cite{MOOW11} conjecture that if \mcal{F}
is a collection of finite fields, then $M(\mcal{F})$ has a
finite number of excluded minors.
This would imply that membership in $M(\mcal{F})$ can always
be finitely axiomatized using sentences in \PH\ when
\mcal{F} contains no infinite field.
In other words, if the conjecture is true, then the
constraint in Theorem~\ref{main} that \mcal{F} contains an
infinite field is always necessary.
\end{remark}

We conclude with a conjecture that strengthens
Conjecture~\ref{conj}.

\begin{conjecture}
Theorem~{\rm \ref{main2}} holds even if
`\PH' is replaced by `MSOL'.
\end{conjecture}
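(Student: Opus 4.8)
The plan is to replace the pigeonhole argument on definable sets (which is the engine of the proof of Theorem~\ref{main2}) with an Ehrenfeucht--Fra\"iss\'e game tailored to MSOL over rank structures. The obstacle that the present proof evades, and that the conjecture confronts, is quantifier alternation: in \PH\ every structure witnessing an existential set-block commits to a \emph{single} interpretation $\phi_{\mcal{S}}$, so one need only locate one circuit-hyperplane that avoids the boundedly many sets definable from that fixed interpretation. With unrestricted alternation the adversary chooses sets \emph{adaptively}, probing in later rounds for whichever circuit-hyperplane was relaxed, so a static count of definable subsets no longer suffices.

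First I would fix the quantifier rank $q$ of a putative finite axiomatization and reduce to exhibiting, for arbitrarily large $N$, two structures that agree on all MSOL sentences of rank at most $q$ yet lie on opposite sides of $M(\mcal{F})$. The natural candidates are $\kin{N}$ (representable over every infinite field, by Proposition~\ref{KinKrep}) and $\kin{N}^{-}$ (representable over no field, by Proposition~\ref{Kin-}); alternatively one pairs $\kin{N}_{s}^{=}$ against $\kin{N}^{-}$ to exploit the double-relaxation Lemma~\ref{doublerelax}. One then sets up the $r$-round MSOL game on the pair, in which Spoiler picks a set or an element in one structure and Duplicator answers in the other, and invokes the standard correspondence between such games and MSOL-equivalence of bounded quantifier rank.

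The heart of the argument is a winning strategy for Duplicator, and here I would lean on the large symmetry of $\kin{N}$: the symmetric group acting on $\{H_{1},\ldots,H_{N-1}\}$ (inherited from the transversal presentation of $M_{N+1}$) supplies a rich family of automorphisms of $\kin{N}$. The invariant Duplicator maintains is a rank-preserving correspondence between the played sets, together with the promise that the relaxed circuit-hyperplane $H_{s}\cup H_{N}$ stays \emph{unnamed}. By Proposition~\ref{symdif} each set a player produces differs from a permutation-invariant set by at most $2n$ elements, so each round ``touches'' only boundedly many of the $N-1$ candidate circuit-hyperplanes; with $N$ enormous relative to $q$, Duplicator can, after each of Spoiler's moves, apply a permutation of the $H_i$ carrying the hidden $H_{s}\cup H_{N}$ clear of everything played so far, and then answer. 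Since the rank functions of $\kin{N}$ and its relaxation differ only on the single set $H_{s}\cup H_{N}$, keeping that set unnamed preserves every rank value, comparison, and sum the game can interrogate.

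The main obstacle I anticipate is the rank function itself. The usual EF machinery is built for purely relational structures, whereas here Duplicator must preserve an integer-valued function together with the arithmetic of $\leq$ and $+$ on \emph{all} definable sets the play can name, not merely incidences among elements. A single relaxation alters exactly one rank value, but under set quantification the players can name exponentially many sets built from their moves, so one must show that the full ``rank type'' of the played configuration --- the entire pattern of rank values and their order and sum relations --- is genuinely invisible to that lone hidden change. Making this robust against adaptive alternation, rather than the two non-alternating blocks handled in Cases~1--4 of the proof of Theorem~\ref{main2}, is precisely the step separating \PH\ from MSOL, and is where I expect the real difficulty to lie.
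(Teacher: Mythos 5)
This statement is a conjecture: the paper offers no proof of it, and indeed the entire reason Theorem~\ref{main2} is stated for \PH\ rather than MSOL is that the authors could not carry their argument over to unrestricted quantifier alternation. So there is no proof in the paper to compare yours against; the only question is whether your plan closes the gap, and it does not. You correctly identify the natural line of attack (an Ehrenfeucht--Fra\"iss\'e game on $\kin{N}$ versus $\kin{N}^{-}$, exploiting the symmetry permuting $H_{1},\ldots,H_{N-1}$) and you correctly locate the difficulty, but the proposal stops exactly at that difficulty rather than resolving it.

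The concrete gap is in the proposed Duplicator invariant, that ``the relaxed circuit-hyperplane stays unnamed.'' This cannot be maintained in both structures at once. In $\kin{N}^{-}$ the relaxed set $H_{1}\cup H_{N}$ is merely a basis, and nothing prevents Spoiler from playing it as a set move in the very first round; Duplicator must answer with some basis $B$ of $\kin{N}$, and no automorphism of $\kin{N}$ can make $B$ resemble a union of two of the $H_{i}$'s that meets $N-2$ circuit-hyperplanes exactly in the pair $H_{N}=\{e,f\}$, because in $\kin{N}$ every set of that form is a circuit-hyperplane, not a basis. Whether Spoiler can convert this structural asymmetry into a win within boundedly many further rounds is precisely the open question. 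Your appeal to Proposition~\ref{symdif} does not address it: that proposition controls how \emph{element} moves perturb the sets definable from a \emph{fixed} $\phi_{\mcal{S}}$, i.e.\ it is an artifact of the non-adaptive quantifier blocks of \PH, and it gives no control over a set move chosen in response to earlier set moves. You also flag, correctly, that Duplicator must preserve the full arithmetic of $|\cdot|$, $r(\cdot)$, $+$ and $\leq$ over all Boolean combinations of played sets, which the standard relational Ehrenfeucht--Fra\"iss\'e machinery does not handle; that, too, is left unresolved. In short, this is a reasonable research programme, but every step beyond what is already in the proof of Theorem~\ref{main2} is asserted rather than proved, so the conjecture remains open.
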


\section{Acknowledgements}
We thank Rod Downey, Jim Geelen, Rob Goldblatt,
Noam Greenberg, Martin Grohe, and Johann Makowsky for
valuable discussions.



\end{document}